\documentclass[12pt]{amsart}
\pdfoutput=1

\usepackage[colorlinks,linkcolor=blue,citecolor=purple]{hyperref}
\usepackage[obeyFinal,textsize=scriptsize,shadow,loadshadowlibrary]{todonotes}
\usepackage[shortlabels]{enumitem}
\usepackage{amsmath}
\usepackage{amssymb}
\usepackage{amsthm}
\usepackage{booktabs}
\usepackage[nameinlink]{cleveref}
\usepackage{derivative}
\usepackage{graphicx}
\usepackage{mathdots}
\usepackage{mathrsfs}
\usepackage{mathtools}
\usepackage{microtype}

\usepackage{asymptote}
\usepackage{tikz-cd}
\usetikzlibrary{decorations.pathmorphing}

\allowdisplaybreaks

\usepackage[backend=biber,backref=true,style=alphabetic]{biblatex}
\DefineBibliographyStrings{english}{%
  backrefpage  = {cited p.}, 
  backrefpages = {cited pp.} 
}
\DeclareFieldFormat[article]{volume}{\textbf{#1}\addcolon\space} 
\DeclareSourcemap{\maps[datatype=bibtex]{\map{\step[fieldsource=ISBN-13,fieldtarget=ISBN]}}}

\usepackage{thmtools}
\declaretheorem[name=Theorem,numberwithin=section]{theorem}
\declaretheorem[name=Lemma,sibling=theorem]{lemma}
\declaretheorem[name=Proposition,sibling=theorem]{proposition}

\declaretheorem[name=Conjecture,sibling=theorem]{conjecture}

\declaretheorem[name=Definition,sibling=theorem,style=definition]{definition}

\declaretheorem[name=Remark,sibling=theorem,style=definition]{remark}

\providecommand{\wt}{\widetilde}

\providecommand{\CC}{\mathbb C}

\providecommand{\QQ}{\mathbb Q}

\providecommand{\ZZ}{\mathbb Z}

\usepackage{listings}
\newcommand{\NN}{\mathbb{Z}_{\ge 0}}
\usepackage[margin=1in]{geometry}
\usepackage{xcolor}
\usepackage{comment}

\subjclass[2010]{20M14, 11P91}
\keywords{numerical semigroups, degrees of syzygies}

\title[Syzygies of numerical semigroups]
{Fel's Conjecture on Syzygies of Numerical Semigroups}
\date{February 3, 2026}
\newif\ifmanyauthors
\manyauthorstrue 

\ifmanyauthors
  \usepackage[foot]{amsaddr}
  \usepackage{textcomp}
  \newcommand{\dmd}{\ensuremath{\diamond}}

  \author{Evan Chen\textsuperscript{\dag}}
  \email{evan@axiommath.ai}

  \author{Chris Cummins\textsuperscript{*}}
  \email{chris@axiommath.ai}

  \author{GSM\textsuperscript{*}}

  \author{Dejan Grubisic\textsuperscript{*}}
  \email{dejan@axiommath.ai}

  \author{Leopold Haller\textsuperscript{*}}
  \email{leo@axiommath.ai}

  \author{Letong Hong\textsuperscript{\dmd}}
  \email{carina@axiommath.ai}

  \author{Andranik Kurghinyan\textsuperscript{*}}
  \email{andranik@axiommath.ai}

  \author{Kenny Lau\textsuperscript{\dag*}}
  \email{kenny@axiommath.ai}

  \author{Hugh Leather\textsuperscript{*}}
  \email{hugh@axiommath.ai}

  \author{Seewoo Lee\textsuperscript{\dag}}
  \email{seewoo@axiommath.ai}

  \author{Aram Markosyan\textsuperscript{*}}
  \email{am@axiommath.ai}

  \author{Ken Ono\textsuperscript{\dag}}
  \email{ken@axiommath.ai}

  \author{Manooshree Patel\textsuperscript{*}}
  \email{manooshree@axiommath.ai}

  \author{Gaurang Pendharkar\textsuperscript{*}}
  \email{gaurang@axiommath.ai}

  \author{Vedant Rathi\textsuperscript{*}}
  \email{vedant@axiommath.ai}

  \author{Alex Schneidman\textsuperscript{*}}
  \email{alex@axiommath.ai}

  \author{Volker Seeker\textsuperscript{*}}
  \email{volker@axiommath.ai}

  \author{Shubho Sengupta\textsuperscript{\dmd}}
  \email{shubho@axiommath.ai}

  \author{Ishan Sinha\textsuperscript{*}}
  \email{ishan@axiommath.ai}

  \author{Jimmy Xin\textsuperscript{*}}
  \email{jimmy@axiommath.ai}

  \author{Jujian Zhang\textsuperscript{\dag*}}
  \email{jujian@axiommath.ai}

  \address{Axiom Math, 124 University Avenue, Palo Alto, CA 94301}

\else
  \usepackage{amsaddr}

  \author{Evan Chen$^{a}$}
  \email{evan@axiommath.ai}
  \thanks{$^{a}$Axiom Math, 124 University Avenue, Palo Alto, CA 94301.
    Corresponding author: \texttt{evan@axiommath.ai}.}

  \author{Kenny Lau$^{a}$}
  \email{kenny@axiommath.ai}

  \author{Ken Ono$^{a}$}
  \email{ken@axiommath.ai}

  \author{Jujian Zhang$^{a}$}
  \email{jujian@axiommath.ai}

  \hypersetup{
    pdfauthor={Evan Chen, Kenny Lau, Ken Ono, and Jujian Zhang},
  }
\fi

\hypersetup{
  pdftitle={On the connection between syzygy genera and a sequence of universal symmetric polynomials}
}

\begin{document}
\maketitle

\ifmanyauthors
\begin{center}
  \footnotesize
  Authors are listed alphabetically. \\
  \textsuperscript{\dag}Mathematical contributor,
  \textsuperscript{*}Engineering contributor,
  \textsuperscript{\dmd}Principal investigator.
\end{center}
\fi

\begin{abstract}
Let $S=\langle d_1,\dots,d_m\rangle$ be a numerical semigroup and $k[S]$ its semigroup ring.
The Hilbert numerator of $k[S]$ determines normalized alternating syzygy power sums $K_p(S)$
encoding alternating power sums of syzygy degrees.
Fel conjectured an explicit formula for $K_p(S)$, for all $p\ge 0$, in terms of the gap power sums
$G_r(S)=\sum_{g\notin S} g^r$ and universal symmetric polynomials $T_n$ evaluated at the generator power sums
$\sigma_k=\sum_i d_i^k$ (and $\delta_k=(\sigma_k-1)/2^k$).
We prove Fel's conjecture via exponential generating functions and coefficient extraction, isolating the
universal identities for $T_n$ needed for the derivation.
The argument is fully formalized in Lean/Mathlib, and was produced automatically by AxiomProver
from a natural-language statement of the conjecture.
\end{abstract}

\section{Introduction}
\label{sec:intro}
Numerical semigroups form a classical meeting point of
additive number theory and commutative algebra.
Throughout this paper, let
\[ S \coloneq \left\langle d_1, \dots, d_m \right\rangle \subseteq \NN \]
denote a semigroup with $m \ge 1$ generators ($\gcd(d_1,\dots,d_m)=1$), and let
\[ \Delta \coloneq \NN \setminus S \]
denote the gap set (finite by definition).
We always assume $0 \in S$.
The semigroup ring
\[ k[S] \cong k[t^{d_1},\dots,t^{d_m}] \]
is a one-dimensional graded domain whose Hilbert series records
fine arithmetic data of $S$ through $\Delta$.
The interplay between gaps, Hilbert series, and graded free resolutions
links numerical semigroups to topics such as Frobenius-type problems,
restricted partition functions, and toric/monomial methods.
We refer the reader to
\cite{ref:RosalesGarciaSanchez} for background on numerical semigroups and
\cite{ref:MillerSturmfels} for commutative-algebraic and combinatorial aspects
of semigroup rings and their resolutions.

In recent work, Fel \cite{ref:fel0,ref:fel} introduced alternating power sum coefficients
built from the degrees of syzygy in the Hilbert numerator of $k[S]$,
and conjectured a uniform closed formula (\Cref{conj:main} below)
expressing certain normalized combinations $K_p(S)$ in terms of gap power sums
\begin{equation}
  G_r(S) \coloneq \sum_{g \in \Delta} g^r
  \label{eq:gap}
\end{equation}
and universal symmetric polynomials $T_n$.
In this paper, we prove Fel's conjecture for all $p\ge 0$
by a direct formal-power-series argument.

\subsection{Hilbert series of numerical semigroups, and alternating syzygy}
We begin by recalling the relevant mathematical objects related to $k[S]$.

\begin{definition}
  \label{def:hilbert}
  The \emph{Hilbert series} for $S$ is defined as the ordinary generating
  function $H_S(z) \coloneq \sum_{s \in S} z^s$.
  From the Hilbert-Serre-Poincar\'{e} theorem \cite[Theorem 11.1]{AM69},
  one may write
  \[ H_S(z) = \frac{Q_S(z)}{\prod_1^m (1-z^{d_i})} \]
  for some polynomial $Q_S(z) \in \ZZ[z]$, called the \emph{Hilbert numerator} of $S$.
\end{definition}
The Hilbert numerator $Q_S$ takes a specific form: it can be written as
an alternating sum of monomials
\begin{equation}
  Q_S(z) = 1 - \sum_{j=1}^{\beta_1} z^{C_{1,j}}
  + \sum_{j=1}^{\beta_2} z^{C_{2,j}} - \sum_{j=1}^{\beta_3} z^{C_{3,j}} + \dots
  + (-1)^{m-1} \sum_{j=1}^{\beta_{m-1}} z^{C_{m-1,j}}
  \label{eq:syzygy}
\end{equation}
where $\beta_i > 0$ are called the \emph{partial Betti numbers}
and the $C_{i,j}$ are called the \emph{degrees of syzygies},
as defined in \cite[\S2]{ref:fel0}.
(We note in \emph{loc.\ cit.}\ that there are additional constraints on $\beta_i$
and $C_{i,j}$, but they are not needed for the subsequent \Cref{def:alternating}.)

The degrees of syzygy allow for the statement of several surprising results.
For example, define the alternating power sums of syzygy degrees as follows.
\begin{definition}
  \label{def:alternating}
  For $r \ge 0$ define the sum of the alternating $r$th powers by:
  \[ \CC_r(S) \coloneq \sum_{i=1}^{m-1} \sum_{j=1}^{\beta_i} (-1)^i C_{i,j}^r. \]
\end{definition}

\begin{definition}
  Additionally, define the shorthand
  \[ \pi_m \coloneq d_1 \dots d_m. \]
\end{definition}
One of the main theorems of \cite{ref:fel0} computes $\CC_r$.
\begin{theorem}
  [{\cite[Theorem 1]{ref:fel0}}]
  \label{thm:Kp}
  We have $\CC_0(S) = 1$, $\CC_r(S) = 0$ for $1 \le r \le m-2$, and
  \[ \CC_{m-1}(S) = (-1)^m (m-1)! \pi _m. \]
\end{theorem}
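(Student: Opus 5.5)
The plan is to read off the vanishing of the moments $\CC_r$ from the order of vanishing of $Q_S$ at $z=1$. Set $z=e^{-s}$ and consider the entire function
\[ \Phi(s) \coloneq Q_S(e^{-s}) = 1 + \sum_{i=1}^{m-1}\sum_{j=1}^{\beta_i} (-1)^i e^{-C_{i,j}s}. \]
Expanding each exponential gives
\[ \Phi(s) = \sum_{r\ge 0} \frac{(-s)^r}{r!}\Bigl(\delta_{r,0} + \CC_r(S)\Bigr), \]
where $\delta_{r,0}$ is the Kronecker delta. So the Taylor coefficients of $\Phi$ at $s=0$ are $\delta_{r,0}+\CC_r(S)$, up to the factor $(-1)^r/r!$. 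The theorem will follow once I show that $\Phi(s)=(-1)^{m-1}\cdots$ vanishes to order exactly $m-1$ at $s=0$ with an explicit leading coefficient.

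For the second expression I use the Hilbert series. Since $\Delta$ is finite, $H_S(z) = \frac{1}{1-z} - \sum_{g\in\Delta} z^g$, so
\[ Q_S(z) = \prod_{i=1}^m (1-z^{d_i}) \cdot \Bigl( \frac{1}{1-z} - \sum_{g\in\Delta} z^g \Bigr). \]
Substituting $z=e^{-s}$, each factor satisfies $1-e^{-d_i s} = d_i s\,(1+O(s))$, so $\prod_i(1-e^{-d_i s}) = \pi_m s^m (1+O(s))$. Next, $\frac{1}{1-e^{-s}} = s^{-1}(1+O(s))$, while $\sum_{g} e^{-gs}$ is holomorphic at $0$. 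Hence
\[ \Phi(s) = \pi_m s^{m-1} + O(s^m). \]

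Comparing coefficients of $s^r$ for $r\le m-1$ then finishes the argument. For $r=0$ (with $m\ge2$) we get $1+\CC_0=0$. That gives $\CC_0=-1$, which conflicts with the stated $\CC_0=1$, so the sign convention needs attention. The paper's normalization in \eqref{eq:syzygy} starts with $1-\sum z^{C_{1,j}}$, so by \Cref{def:alternating} $\CC_0 = \sum_i(-1)^i\beta_i$. Vanishing of $Q_S(1)$ gives $\sum_i (-1)^i\beta_i = -1$. I expect the intended statement to include the constant term, i.e. the $i=0$ term with $C_{0,1}=0$, or else a global sign convention $(-1)^{i+1}$. I will adopt whichever convention makes $\CC_0=1$ and track the sign consistently; this bookkeeping is the one place requiring care.

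With that convention $\CC_r$ is, up to a global sign $\varepsilon$, equal to $(-1)^r r!\,[s^r]\Phi$ for $r\ge1$. Then $\CC_r=0$ for $1\le r\le m-2$, and
\[ \CC_{m-1} = \varepsilon\,(-1)^{m-1}(m-1)!\,\pi_m. \]
With $\varepsilon=-1$, the convention that gives $\CC_0=1$, this is $(-1)^m(m-1)!\,\pi_m$ as claimed. The main obstacle is therefore not analytic but this sign normalization. The analytic content is simply the order-$(m-1)$ zero of $Q_S(e^{-s})$, which comes from the $m$ simple zeros of the denominator factors against the single simple pole of $H_S$ at $z=1$.
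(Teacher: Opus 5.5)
Your argument is correct. It is also essentially what the paper's own machinery gives, even though the paper only cites Fel for this theorem: \Cref{lem:one_minus_Q_at_exp_coeff} (with $z=e^t$ in place of your $z=e^{-s}$) gives the exact identity $1-Q_S(e^t)=1+(-1)^m\pi_m\bigl(t^{m-1}B(t)+t^mA(t)\Phi_S(e^t)\bigr)$, and reading off the coefficients of $t^0,\dots,t^{m-1}$ via \Cref{lem:C_n_eq_factorial_times_coeff} and $B(0)=1$ yields the statement for $m\ge 2$, just as your leading-order expansion $Q_S(e^{-s})=\pi_m s^{m-1}+O(s^m)$ does.

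Your sign diagnosis is right too. \Cref{def:alternating} as printed is off by a global sign from the paper's working definition \eqref{eq:Cr}, $\CC_r(S)=\sum_n n^r[z^n](1-Q_S(z))$. That definition is exactly your choice $\varepsilon=-1$, and it matches $\CC_n(\langle 3,5\rangle)=15^n$ in \Cref{sec:examples}. Your alternative fix of adding an $i=0$ term with $C_{0,1}=0$ would not work, since it gives $\CC_0=Q_S(1)=0$.
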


\subsection{Fel's sequence $T_n$ of universal symmetric polynomials}
What about $\CC_r(S)$ for $r \ge m$?
In \cite{ref:fel}, these $\CC_n(S)$ were reparametrized in terms of
a certain explicit coefficient denoted $K_p(S)$.
\begin{theorem}
  [{\cite[\S3]{ref:fel}}]
  \label{thm:Cmp_Kp}
  There exists a coefficient $K_p(S)$ such that for every $p \ge 0$ we have
  \[ \CC_{m+p}(S) = (-1)^m \pi_m \cdot \frac{(m+p)!}{p!} \cdot K_p(S). \]
  This coefficient $K_p$ is a linear combination of the $G_0$, $G_1$, \dots, $G_p$
  defined by \eqref{eq:gap}.
\end{theorem}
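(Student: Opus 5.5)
The plan is to compare two expressions for the Hilbert numerator $Q_S$ and extract coefficients after substituting $z=e^{-s}$. The starting point is the gap decomposition of the Hilbert series. Since $S=\NN\setminus\Delta$,
\[ H_S(z)=\frac{1}{1-z}-\sum_{g\in\Delta} z^g, \]
and hence
\[ Q_S(z)=\prod_{i=1}^m(1-z^{d_i})\left(\frac{1}{1-z}-\sum_{g\in\Delta}z^g\right). \]
On the other hand, \eqref{eq:syzygy} together with \Cref{def:alternating} gives
\[ Q_S(e^{-s})=1+\sum_{i,j}(-1)^i e^{-sC_{i,j}}=\sum_{r\ge0}\frac{(-s)^r}{r!}\CC_r(S). \]
So the coefficient of $s^{m+p}$ in $Q_S(e^{-s})$ equals $(-1)^{m+p}\CC_{m+p}(S)/(m+p)!$.

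Next I compute the same coefficient from the product side. Write
\[ \prod_i(1-e^{-sd_i})=\pi_m s^m\,\Phi(s), \qquad \Phi(s)=\prod_i\frac{1-e^{-sd_i}}{sd_i}, \]
so that $\Phi$ is a power series with $\Phi(0)=1$. Similarly, $\frac{1}{1-e^{-s}}=\frac1s\Psi(s)$ with $\Psi(s)=\frac{s}{1-e^{-s}}$, a Bernoulli-type series. Finally,
\[ \sum_{g\in\Delta}e^{-sg}=\sum_{r\ge0}\frac{(-s)^r}{r!}G_r(S). \]
Combining these,
\[ Q_S(e^{-s})=\pi_m s^m\,\Phi(s)\left(\frac{\Psi(s)}{s}-\sum_{r}\frac{(-s)^rG_r}{r!}\right). \]
The term $\pi_m s^{m-1}\Phi\Psi$ produces the coefficients of $s^{r}$ for $r\le m-1$; this recovers \Cref{thm:Kp} as a consistency check. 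It also contributes to every higher power of $s$.

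Extracting the coefficient of $s^{m+p}$ then yields
\[ (-1)^{m+p}\frac{\CC_{m+p}}{(m+p)!}=\pi_m\left([s^{p+1}](\Phi\Psi)-\sum_{r=0}^{p}[s^{p-r}]\Phi\cdot\frac{(-1)^rG_r}{r!}\right). \]
I then simply define
\[ K_p(S):=(-1)^p\,p!\left([s^{p+1}](\Phi\Psi)-\sum_{r=0}^{p}\frac{(-1)^r}{r!}\,[s^{p-r}]\Phi\cdot G_r\right). \]
This gives exactly $\CC_{m+p}=(-1)^m\pi_m\frac{(m+p)!}{p!}K_p$. The coefficients $[s^k]\Phi$ and $[s^{p+1}](\Phi\Psi)$ depend only on the $d_i$. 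They are symmetric polynomials in the $d_i$: expanding $\log\Phi=\sum_i\log\frac{1-e^{-sd_i}}{sd_i}$ writes them in terms of the power sums $\sigma_k$.

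The remaining point is the phrase ``linear combination of $G_0,\dots,G_p$''. The $G_r$ enter linearly with coefficients $\frac{(-1)^{p+r}p!}{r!}[s^{p-r}]\Phi$, but there is also the constant term $[s^{p+1}](\Phi\Psi)$. This term can be absorbed using the classical relation between $G_0$ (the genus) and the data of $S$. That relation comes from the same computation at $s^m$: the $p=0$ instance of the extraction, compared with the known value of $\CC_m$, expresses the constant in terms of $G_0$. If a strictly homogeneous combination is intended, this absorption is the one step needing care. Otherwise I read ``linear combination'' as affine linear with coefficients polynomial in the $d_i$, which is what the extraction above gives directly. The main obstacle is bookkeeping: keeping the signs from $z=e^{-s}$ and the shift of the index by $m$ consistent.
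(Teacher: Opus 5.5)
Your argument is correct and is essentially the paper's own computation in Section~4, which also proves this statement: the identity $Q_S=P_S\bigl(\frac{1}{1-z}-\Phi_S\bigr)$ followed by extraction of the coefficient of degree $m+p$. The only change is $z=e^{-s}$ in place of $z=e^{t}$, so your $\Phi(s)$ and $\Phi(s)\Psi(s)$ are the paper's $A(-s)$ and $B(-s)$, and your affine reading of ``linear combination'' is the intended one (already $K_0=G_0+\delta_1$ has a constant term).

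Drop the suggested absorption of the constant into $G_0$, however. No independent value of $\CC_m$ is available: \Cref{thm:Kp} stops at $r=m-1$, so the $p=0$ extraction \emph{is} the formula for $\CC_m$. Nor does any absorption with universal polynomial coefficients exist: for $m=2$ one has $K_0=d_1d_2/2$ while $G_0=(d_1-1)(d_2-1)/2$. Note also that reading \Cref{def:alternating} literally makes your $K_p$ the negative of Fel's and flips the sign in your consistency check of \Cref{thm:Kp}. This is because the paper actually works with the convention \eqref{eq:Cr}, i.e.\ the sign $(-1)^{i+1}$.
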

The quantity $G_i$ are called the \emph{genera} or \emph{gap power sums} of $S$.
(In particular, $G_0 = |\Delta|$ is called the genus of $S$.)
In \cite[equation (23)]{ref:fel}, several $K_p$ are established by direct calculation.
Let $\sigma_k \coloneq \sum_1^m d_i^k$ and $\delta_k \coloneq \frac{\sigma_k-1}{2^k}$;
then the formula for $0 \le p \le 3$ are:
\begin{align*}
  K_0 &= G_0 + \delta_1 \\
  K_1 &= G_1 + \tfrac{\sigma_1}{2} G_0 + \tfrac{3\delta_1^2 + \delta_2}{6} \\
  K_2 &= G_2 + \sigma_1 G_1 + \tfrac{3\sigma_1^2 + \sigma_2}{12} G_0
    + \tfrac{\delta_1(\delta_1^2+\delta_2)}{3} \\
  K_3 &= G_3 + \tfrac{3\sigma_1}{2} G_2 + \tfrac{3\sigma_1^2 + \sigma_2}{12} G_1
    + \tfrac{\sigma_1(\sigma_1^2 + \sigma_2)}{8} G_0
    + \tfrac{15\delta_1^4 + 30\delta_1^2\delta_2 + 5\delta_2^2 - 2\delta_4}{60}.
\end{align*}
These formulas for $K_p$ have been written in a suggestive form,
where a special sequence of polynomials
\[ T_0(\sigma) = 1, \quad
  T_1(\sigma) = \frac{\sigma_1}{2}, \quad
  T_2(\sigma) = \frac{3\sigma_1^2+\sigma_2}{12}, \quad
  T_3(\sigma) = \frac{\sigma_1(\sigma_1^2 + \sigma_2)}{8}, \; \dots \]
given by $T_n(\sigma) \coloneq T(\sigma_1, \sigma_2, \dots, \sigma_n)$ seem to reappear.
These polynomials are defined in \cite{ref:felpoly}
and we describe these $T_n$ fully in \Cref{def:A} and \Cref{def:B}.
Throughout, we refer to them as \emph{universal symmetric polynomials}.
Fel conjectured that the $T_n$ describe $K_p(S)$ fully;
analogously writing $T_n(\delta) \coloneq T_n(\delta_1, \delta_2, \dots, \delta_n)$, we have:
\begin{conjecture}
  [{\cite[Conjecture 1]{ref:fel}}]
  \label{conj:main}
  For every $p \ge 0$, we have
  \[ K_p(S) = \sum_{r=0}^p \binom pr T_{p-r}(\sigma) G_r(S)
    + \frac{2^{p+1}}{p+1} T_{p+1}(\delta). \]
\end{conjecture}
Two additional conjectures about $T_n$
(not directly related to semigroups \emph{per se})
are also stated in \cite{ref:felpoly},
which we describe in the next section.

\subsection{The reappearance of $T_n$ in relation to Ramanujan's $U_{2n}(q)$ and other contexts}
This section provides some historical background
showing three appearances of the universal symmetric polynomials $T_n$
outside the context of numerical semigroups, following \cite{ref:quasimodular}.
This theory will not be used in our proof itself of our main theorem,
but establishes the contexts in which $T_n$ appears elsewhere in mathematics.

\subsubsection{Restricted partitions}
For positive integers $d_1$, \dots, $d_m$, let $W(s, d_\bullet)$
denote the restricted partition function
(that is, the number of integer partitions of $s \ge 0$ into $d_i$).
Sylvester \cite{ref:sylvester} decomposes $W(s, d_\bullet)$
as the sum of so-called \emph{Sylvester waves} $W_q$:
\[ W(s, d_\bullet) = \sum_{\exists i : q \mid d_i} W_q(s, d_\bullet).  \]
In particular, the \emph{first Sylvester wave} $W_1$ is of special interest:
it has the form (see \cite[(3.16) and (7.1)]{ref:fel0})
\[ W_1(s, d_\bullet) = \frac{1}{(m-1)!} \sum_{r=0}^{m-1} f_r(d_\bullet) s^{m-1-r}
  \quad f_r(d_\bullet) \coloneq  \left( \sigma_1 + \sum_{i=1}^m \mathcal{B}_i d_i \right)^r, \]
with each $\mathcal{B}_i$ denoting a Bernoulli umbra,
meaning $(\mathcal B_i d_i)^n \coloneq B_n d_i^n$.

However, it turns out that $f_r$ are actually given up to sign changes by
the universal symmetric polynomials $T_n$:
\begin{theorem}
  [{\cite[Conjecture 1]{ref:felpoly}} or {\cite[Theorem 1.5]{ref:quasimodular}}]
  \label{conj:fel1}
  For $n \ge 2$, we have
  \[ f_n(\sigma_1, \dots, \sigma_n)
    = T_n(\sigma_1, -\sigma_2, -\sigma_3, \dots, -\sigma_{n-1}, -\sigma_n) \]
  where the signs of $\sigma_2$, \dots, $\sigma_n$ are flipped on the right hand side.
\end{theorem}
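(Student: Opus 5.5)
The plan is to compare exponential generating functions. Both sides of the claimed identity are coefficients of a power series in an auxiliary variable $t$. The $f_r$ come from a product of Bernoulli-type generating functions, and the $T_n$ come from the exponential of a power-sum series. Flipping the signs of $\sigma_2,\dots,\sigma_n$ should turn out to be exactly the logarithmic identity linking $x/(1-e^{-x})$ with $(e^x-1)/x$.

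\emph{Step 1: generating function for $T_n$.} I will use the description of $T_n$ from \Cref{def:A} and \Cref{def:B} in the generating-function form
\[ \sum_{n\ge 0} T_n(\sigma)\frac{t^n}{n!} = \exp\Bigl(\frac{\sigma_1 t}{2} + \sum_{k\ge 2} \frac{B_k}{k\cdot k!}\,\sigma_k t^k\Bigr). \]
The first cases listed in the introduction confirm this shape. The coefficient of $t$ gives $T_1=\sigma_1/2$. The coefficient of $t^2/2!$ gives $2(\sigma_1^2/8+\sigma_2/24)=(3\sigma_1^2+\sigma_2)/12$. The coefficient of $t^3/3!$ gives $6(\sigma_1^3/48+\sigma_1\sigma_2/48)=\sigma_1(\sigma_1^2+\sigma_2)/8$. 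If \Cref{def:B} is instead phrased through a recursion, the first task is to show that it is equivalent to this exponential formula. The usual route is to differentiate in $t$ and match the recursion term by term.

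\emph{Step 2: generating function for $f_r$.} Unwinding the umbral definition coordinatewise, and using independence of the umbrae $\mathcal B_i$, gives
\[ \sum_{r\ge0} f_r\frac{t^r}{r!} = e^{\sigma_1 t}\prod_{i=1}^m \frac{d_i t}{e^{d_i t}-1} = \prod_{i=1}^m \frac{d_i t}{1-e^{-d_i t}}. \]
Now apply the classical expansion
\[ \log\frac{e^x-1}{x} = \frac{x}{2} + \sum_{k\ge2}\frac{B_k}{k\cdot k!}x^k. \]
To prove it, differentiate: the derivative of the left side is $\frac{e^x}{e^x-1}-\frac1x$, and this equals $\frac12+\frac1x\bigl(\frac{x}{e^x-1}-1+\frac x2\bigr)$, which expands in Bernoulli numbers. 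Since $\frac{x}{1-e^{-x}} = xe^x/(e^x-1)$, it follows that
\[ \log\frac{x}{1-e^{-x}} = x-\log\frac{e^x-1}{x} = \frac x2-\sum_{k\ge2}\frac{B_k}{k\cdot k!}x^k. \]
Put $x=d_it$ and sum over $i$. This gives
\[ \sum_r f_r\frac{t^r}{r!}=\exp\Bigl(\frac{\sigma_1t}{2}-\sum_{k\ge2}\frac{B_k}{k\cdot k!}\sigma_kt^k\Bigr). \]

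\emph{Step 3: conclusion.} The series in Step 2 is the series of Step 1 after the substitution $\sigma_k\mapsto-\sigma_k$ for every $k\ge2$, with $\sigma_1$ left unchanged. Comparing coefficients of $t^n/n!$ yields $f_n=T_n(\sigma_1,-\sigma_2,\dots,-\sigma_n)$. This actually holds for all $n$, and for $n\le1$ it is trivial.

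I expect the main obstacle to be Step 1. The argument depends on exactly how $T_n$ is defined in \Cref{def:A}/\Cref{def:B}: a determinantal or recursive definition would require a separate lemma identifying it with the exponential formula. Everything after that is formal power series manipulation. In particular, odd Bernoulli numbers beyond $B_1$ vanish, so only even $k$ actually contribute.
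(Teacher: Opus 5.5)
Your argument is correct, but it takes a different route from the one the paper relies on. The paper gives no proof of this statement. It cites \cite[Theorem 1.5]{ref:quasimodular}, where the identity comes from identifying $T_n$ with the symmetric-function avatar $\wt Y_n$ of Ramanujan's quasimodular series $U_{2n}(q)$. You replace all of that with a short generating-function computation. The EGF of the $f_r$ is $e^{\sigma_1 t}\prod_i \frac{d_it}{e^{d_it}-1} = e^{\sigma_1 t}/A(t)$, so its logarithm is $\sigma_1 t-\log A(t)$. Since $\log A(t)=\frac{\sigma_1 t}{2}+\sum_{k\ge2}\frac{B_k}{k\cdot k!}\sigma_k t^k$, flipping the signs of $\sigma_2,\sigma_3,\dots$ is exactly this subtraction.

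What your approach buys: it is elementary and self-contained, it holds for all $n\ge0$, and it matches the EGF style of \Cref{def:A} and \Cref{sec:proof}. It also explains why only $\sigma_1$ and the even $\sigma_{2j}$ occur in $T_n$. What the quasimodular route buys: a structural link to $U_{2n}(q)$, and \Cref{conj:fel2} from the same machinery.

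The obstacle you anticipate in Step 1 does not arise. \Cref{def:A} defines $T_n$ through the product $A(t)=\prod_i\frac{e^{x_it}-1}{x_it}$. Apply the expansion of $\log\frac{e^x-1}{x}$ that you prove in Step 2 with $x=x_it$ and sum over $i$; this gives your exponential formula directly, with no small-case checks and no recursion lemma.

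That formula produces $T_n$ as a polynomial in independent indeterminates $\sigma_k$. This is the canonical representation, since power sums in infinitely many variables are algebraically independent, and it is what makes the substitution $\sigma_k\mapsto-\sigma_k$ well defined. Your use of $\frac{x}{e^x-1}$ also fixes the Bernoulli convention $B_1=-\frac12$, which is the one needed: with $B_1=+\frac12$ the identity already fails at $n=2$. State this convention explicitly.
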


\subsubsection{Recursion via zig-zag numbers}
Fel also observed that the $T_n$ seem to satisfy a recursion relation
related to the tangent/zig-zag numbers $A_{2j+1}$,
satisfying
\[ \sec x + \tan x = \sum_j A_j \cdot x^j/j!. \]
Namely, we have the following theorem.
\begin{theorem}
  [{\cite[Conjecture 2]{ref:felpoly}} or {\cite[Theorem 1.7]{ref:quasimodular}}]
  \label{conj:fel2}
  For $n \ge 1$, we have
  \[ \frac{T_{2n+1}}{T_1^{2n+1}}
    = \sum_{j = 0}^n (-1)^j A_{2j+1} \binom{2n+1}{2j+1} \frac{T_{2n-2j}}{T_1^{2n-2j}}.  \]
\end{theorem}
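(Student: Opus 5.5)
The plan is to prove \Cref{conj:fel2} as an identity of exponential generating functions: both sides are coefficients of a product of two power series in an auxiliary variable $y$. The key structural input is the exponential generating function of the $T_n$, which I expect \Cref{def:A} and \Cref{def:B} to give (or to give something equivalent):
\[ \sum_{n\ge 0} T_n(\sigma)\frac{x^n}{n!} = \prod_{i=1}^m \frac{e^{d_i x}-1}{d_i x}
 = \exp\Bigl(\sum_{k\ge 1} c_k\,\sigma_k x^k\Bigr), \qquad \log\frac{e^y-1}{y} = \sum_{k\ge1} c_k y^k . \]
The listed values are consistent with this. The coefficient of $x^2$ is $\sigma_1^2/8+\sigma_2/24$, giving $T_2=(3\sigma_1^2+\sigma_2)/12$. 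The coefficient of $x^3$ gives $T_3=\sigma_1(\sigma_1^2+\sigma_2)/8$. In the universal setting, where the $\sigma_k$ are independent indeterminates, the right-hand exponential is the definition.

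\textbf{Step 1 (parity).} Since $\log\frac{e^y-1}{y} - \frac y2 = \log\frac{\sinh(y/2)}{y/2}$ is even, we have $c_1=\tfrac12$ and $c_k=0$ for odd $k\ge3$. Hence
\[ \mathcal T(x) \coloneq \sum_n T_n \frac{x^n}{n!} = e^{T_1 x}\,E(x), \]
where $E(x)=\exp\bigl(\sum_{k \text{ even}} c_k\sigma_k x^k\bigr)$ is an even power series.

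\textbf{Step 2 (rescaling).} Work over the field of fractions $\QQ(\sigma_1,\sigma_2,\dots)$, where $T_1=\sigma_1/2$ is invertible. Substitute $x=y/T_1$ and set $a_n = T_n/T_1^n$. Then
\[ A(y) \coloneq \sum_n a_n \frac{y^n}{n!} = e^{y}F(y), \qquad F(y)=E(y/T_1), \]
and $F$ is still even.

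\textbf{Step 3 (splitting into parts).} Take even and odd parts: $A_{\mathrm{ev}}=\cosh(y)F(y)$ and $A_{\mathrm{odd}}=\sinh(y)F(y)$. It follows that
\[ A_{\mathrm{odd}}(y)=\tanh(y)\,A_{\mathrm{ev}}(y). \]
From $\tan x=\sum_j A_{2j+1}x^{2j+1}/(2j+1)!$ and $\tanh y = -i\tan(iy)$ we get
\[ \tanh y=\sum_j (-1)^jA_{2j+1}\frac{y^{2j+1}}{(2j+1)!}. \]

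\textbf{Step 4 (coefficient extraction).} Extract the coefficient of $y^{2n+1}/(2n+1)!$ on both sides of the identity from Step 3. The Cauchy product of exponential generating functions gives
\[ a_{2n+1}=\sum_{j=0}^n\binom{2n+1}{2j+1}(-1)^jA_{2j+1}a_{2n-2j}, \]
which is exactly the claimed identity. Since it holds for indeterminate $\sigma_k$, it specializes to any values with $T_1\neq0$.

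\textbf{Main obstacle.} The hard part is matching the paper's actual definition of $T_n$ in \Cref{def:A} and \Cref{def:B} to the exponential generating function above. If the $T_n$ are given instead by an explicit Bell-polynomial or Bernoulli-number formula, I would first show it equals the coefficients of $\exp(\sum c_k\sigma_kx^k)$. The essential fact is that only $\sigma_1$ appears with an odd power of $x$, and verifying that may itself need the identity $B_{2k+1}=0$ for $k\ge1$. Once that is in place, the remaining steps are routine formal power series manipulations.
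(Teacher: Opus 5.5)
Your argument is correct, but it follows a different route from the one the paper relies on. The paper does not prove this statement itself. It quotes it from \cite{ref:quasimodular}, where it is obtained by identifying $T_n$ with the symmetric-function avatars $\wt Y_n$ of the quasimodular forms $Y_n(q)$, which are built from Ramanujan's $U_{2n}(q)$ and $\theta(q)^2$. You instead stay entirely inside formal power series.

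Your ``main obstacle'' disappears once you use \Cref{def:A}. Since $\frac{e^u-1}{u}=e^{u/2}\cdot\frac{\sinh(u/2)}{u/2}$, one gets directly
\[ \sum_{n\ge 0} T_n\frac{t^n}{n!}=\prod_{i=1}^m\frac{e^{x_it}-1}{x_it}=e^{T_1t}\prod_{i=1}^m\frac{\sinh(x_it/2)}{x_it/2}, \]
and the last product is even in $t$. So no Bell-polynomial or Bernoulli bookkeeping is needed. The logarithmic form is only needed to see that $T_n$, written in the $\sigma_k$, does not depend on $m$.

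You can also skip the rescaling and the passage to $\QQ(\sigma_1,\sigma_2,\dots)$. The odd part of $e^{T_1t}E(t)$ equals $\tanh(T_1t)$ times the even part. Extracting $t^{2n+1}$ then gives the polynomial identity $T_{2n+1}=\sum_{j=0}^n(-1)^jA_{2j+1}\binom{2n+1}{2j+1}T_1^{2j+1}T_{2n-2j}$, valid for all $n\ge 0$ with no hypothesis on $T_1$. Separately, rename your $A(y)$, since it collides with the paper's $A(t)$ and with the zigzag numbers $A_j$.

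Your route gives a short, self-contained proof in the same exponential-generating-function spirit as the paper's proof of \Cref{thm:main}. The quasimodular route gives a common framework that also yields \Cref{conj:fel1} and explains why $T_n$ appears alongside $U_{2n}(q)$. In fact, since $\Omega$ is even and $\Psi(\theta(q)^2)=\sigma_1=2T_1$, the generating function $\exp(\theta(q)^2X/2)\,\Omega(X/2)$ has exactly the ``exponential times even series'' shape your argument exploits.
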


\subsubsection{Ramanujan's $q$-series}
Recently a new striking appearance of $T_n$ was found in quasimodular forms,
which led to the resolution of the preceding two conjectures.
Consider Ramanujan's $q$-series
\begin{align*}
  U_{2n}(q)
  &\coloneq \frac{1^{2n+1}-3^{2n+1}q+5^{2n+1}q^3-7^{2n+1}q^6+\dots}{1-3q+5q^3-7q^6+\dots} \\
  &= \frac{\sum_k (-1)^k (2k+1)^{2n+1} q^{\binom{k+1}{2}}}
  {\sum_k (-1)^k (2k+1) q^{\binom{k+1}{2}}}
\end{align*}
which is a weight $2n$ quasimodular form \cite{ref:bcss}.
As in \cite[\S3]{ref:trace}, introduce the generating function
\[ \Omega(X) \coloneq \sum_{n \ge 0} U_{2n}(q) \cdot\frac{X^{2n}}{(2n+1)!}
  = \frac{\sinh X}{X} \prod_{j \ge 1} \left( 1 - \frac{4q^j \sinh^2 X}{(1-q^j)^2} \right) \]
and define $Y_n(q)$ according to
\[ \sum_{n \ge 0} Y_n(q) \cdot \frac{X^n}{n!}
  = \exp\left( \frac{\theta(q)^2 X}{2} \right) \Omega\left( \frac X2 \right) \]
where $\theta(q) \coloneq \prod_{k \ge 1}(1-q^{2k})(1+q^{2k-1})^2$
is the weight $1/2$ theta function.
Then we can describe $Y_n$ with the following proposition.

\begin{proposition}
  [{\cite[Theorem 1.1]{ref:quasimodular}}]
  We have a formula
  \[ Y_n(q) = \frac{1}{2^n(n+1)} \sum_{k \ge 0} \binom{n+1}{2k+1} \theta(q)^{2n-4k} U_{2k}(q). \]
\end{proposition}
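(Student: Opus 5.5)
The plan is a direct comparison of exponential generating function coefficients. The product formula for $\Omega(X)$ is not needed; I will use only its definition as the series $\sum_{k\ge0} U_{2k}(q)\,X^{2k}/(2k+1)!$, together with the defining relation for $Y_n(q)$. Throughout, $q$ is a formal parameter and all manipulations take place in the ring of formal power series in $X$ with coefficients in $\QQ[[q]]$.

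\textbf{Expanding the two factors.} First I rescale the argument of $\Omega$:
\[ \Omega\!\left(\tfrac X2\right) = \sum_{k\ge0} \frac{U_{2k}(q)}{2^{2k}(2k+1)}\cdot\frac{X^{2k}}{(2k)!}. \]
In this form $\Omega(X/2)$ is an exponential generating function whose odd-index coefficients vanish and whose coefficient at index $2k$ is $U_{2k}/(2^{2k}(2k+1))$. The other factor expands directly as
\[ \exp\!\left(\tfrac{\theta^2X}{2}\right)=\sum_{j\ge0}\left(\tfrac{\theta^2}{2}\right)^j \frac{X^j}{j!}. \]

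\textbf{Multiplying.} The product of two exponential generating functions has binomial-convolution coefficients. Extracting the coefficient of $X^n/n!$ therefore gives
\[ Y_n(q)=\sum_{k\ge0}\binom{n}{2k}\left(\frac{\theta^2}{2}\right)^{n-2k}\frac{U_{2k}}{2^{2k}(2k+1)}
 =\frac{1}{2^n}\sum_{k\ge0}\binom{n}{2k}\frac{1}{2k+1}\,\theta^{2n-4k}U_{2k}. \]
The powers of $2$ combine as $2^{-(n-2k)}\cdot 2^{-2k}=2^{-n}$.

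\textbf{Rewriting the binomial factor.} Finally I apply the absorption identity
\[ \frac{1}{2k+1}\binom{n}{2k}=\frac{n!}{(2k+1)!\,(n-2k)!}=\frac{1}{n+1}\binom{n+1}{2k+1}. \]
Substituting this yields exactly the stated formula. The range of summation matches automatically, because both binomials vanish once $2k>n$.

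There is no real obstacle here. The only care needed is to track the factor $(2k+1)!$ versus $(2k)!$ when converting $\Omega$ to exponential-generating-function normalization. That mismatch is precisely what produces the $1/(2k+1)$ factor, and hence the $\binom{n+1}{2k+1}/(n+1)$ shape of the result.
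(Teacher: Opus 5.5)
The paper gives no proof of this proposition. It is quoted from \cite[Theorem 1.1]{ref:quasimodular} purely as background and is not used in the proof of \Cref{thm:main}, so there is no internal argument to compare against.

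Your derivation is correct and self-contained.
\begin{itemize}
  \item Take $\Omega$ to be the series $\sum_{k\ge 0} U_{2k}(q)X^{2k}/(2k+1)!$.
  \item The coefficient of $X^n/n!$ in $\exp(\theta^2X/2)\,\Omega(X/2)$ is the binomial convolution $\sum_k \binom{n}{2k}(\theta^2/2)^{n-2k}U_{2k}/(2^{2k}(2k+1))$.
  \item The powers of $2$ combine to $2^{-n}$.
  \item The absorption identity $\frac{1}{2k+1}\binom{n}{2k}=\frac{1}{n+1}\binom{n+1}{2k+1}$ then gives exactly the stated formula.
\end{itemize}
As a spot check, $n=2$ gives $Y_2=\theta^4/4+U_2/12=(3\theta^4+U_2)/12$ on both sides.

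You are right that the product expression for $\Omega$ plays no role. That expression matters for connecting $U_{2n}$ to the quasimodular side of the story. This identity, by contrast, is a formal consequence of the series definitions of $\Omega$ and $Y_n$.

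The only points worth making explicit concern the ring $\QQ[[q]][[X]]$ in which you work:
\begin{itemize}
  \item $U_{2k}(q)$ is well defined there because its denominator $1-3q+5q^3-\cdots$ has constant term $1$.
  \item $\theta(q)$ is likewise invertible, so $\theta^{2n-4k}$ makes sense even when $2k>n$.
  \item Those terms with $2k>n$ contribute nothing anyway, since $\binom{n+1}{2k+1}=0$ there. So summing over all $k\ge 0$ is harmless.
\end{itemize}
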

Each $Y_n$ then has a canonical symmetric function avatar; it is defined by
\[ \wt Y_n(x_1, x_2, \dots)
  \coloneq \frac{1}{2^n (n+1)} \sum_{k \ge 0}
  \binom{n+1}{2k+1} \sigma_1^{n-2k} \Psi(U_{2k}(q)) \]
where $\Psi$ is a symmetric function representation
defined in \cite[equation (1.9)]{ref:quasimodular},
extended with $\Psi(\theta(q)^2) = \sigma_1$.
These avatars turn out to coincide again with Fel's universal symmetric polynomials:
\begin{theorem}
  [{\cite[Theorem 1.3]{ref:quasimodular}}]
  For integers $n, k \ge 1$,
  \[  T_n(x_1, \dots, x_k) = \wt Y_n(x_1, \dots, x_k, 0, 0, 0, \dots). \]
\end{theorem}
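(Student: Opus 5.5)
The plan is to compare exponential generating functions on both sides, working in the ring of symmetric functions in $x_1, x_2, \dots$ and then specializing to $x_{k+1}=x_{k+2}=\dots=0$.

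\emph{Left side.} I will use the generating-function description of the $T_n$ from \Cref{def:A} and \Cref{def:B}, which I expect to be
\[ \sum_{n\ge0} T_n(x)\frac{t^n}{n!} = \prod_{i} \frac{x_i t}{1-e^{-x_i t}} = \exp\Big(\frac{\sigma_1 t}{2}\Big)\prod_i \frac{x_i t/2}{\sinh(x_i t/2)}. \]
This guess matches the listed values: for instance, the coefficient of $t^2/2!$ is $\tfrac{3\sigma_1^2+\sigma_2}{12}$. Taking logarithms gives
\[ \prod_i \frac{x_i t/2}{\sinh(x_i t/2)} = \exp\Big(-\sum_{j\ge1}\frac{B_{2j}}{2j\,(2j)!}\,\sigma_{2j}\,t^{2j}\Big), \]
so the left side depends only on the power sums $\sigma_1$ and $\sigma_{2j}$.

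\emph{Right side.} The defining formula for $\wt Y_n$ is exactly the symmetric-function shadow of the proposition expressing $Y_n$ through $\theta^{2n-4k}U_{2k}$. The same manipulation of binomial coefficients shows formally that
\[ \sum_n \wt Y_n \frac{X^n}{n!} = \exp\Big(\frac{\sigma_1 X}{2}\Big)\,\Psi\Big(\Omega\big(\tfrac X2\big)\Big). \]
Here $\Psi$ is applied coefficientwise in $X$, $\Psi(\theta^2)=\sigma_1$, and $\Omega$ is the generating function $\sum_n U_{2n}X^{2n}/(2n+1)!$. To justify this, I expand $\exp(\theta^2X/2)\,\Omega(X/2)$, collect the coefficient of $X^n/n!$, and check that it equals $\frac{1}{2^n(n+1)}\sum_k\binom{n+1}{2k+1}\theta^{2n-4k}U_{2k}$. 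This is a short computation using $\binom{n}{2k}\frac{(2k)!}{(2k+1)!}\cdot\frac{1}{2^n}\cdot\frac{n!}{n!}$-type identities. Since $\Psi$ is a ring homomorphism, it commutes with this expansion.

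\emph{Reduction.} The exponential prefactors on the two sides agree, so the theorem reduces to the single identity
\[ \Psi(\Omega(X)) = \prod_i \frac{x_i X}{\sinh(x_i X)}, \]
or equivalently $\Psi(\log\Omega(X)) = -\sum_j \frac{2^{2j}B_{2j}}{2j(2j)!}\sigma_{2j}X^{2j}$. This is the main obstacle, because it depends on the precise form of $\Psi$ from \cite[(1.9)]{ref:quasimodular}. My first attempt will use the product formula for $\Omega$. Taking its logarithm gives
\[ \log\frac{\sinh X}{X} + \sum_{j\ge1}\log\Big(1-\frac{4q^j\sinh^2X}{(1-q^j)^2}\Big), \]
and expanding in powers of $X$ expresses each coefficient as a polynomial in Eisenstein series $E_2, E_4, E_6$ (essentially the Taylor expansion of $\log$ of a Jacobi theta quotient). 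I then expect $\Psi$ to be defined precisely so that it sends the weight-$2j$ Eisenstein-type generator appearing there to a multiple of $\sigma_{2j}$. The identity then becomes a matching of Bernoulli-number constants, which I would verify by comparing with the Jacobi triple product expansion of $\theta_1(z)/\theta_1'(0)$.

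Finally, I specialize to $x_{k+1}=x_{k+2}=\dots=0$. Each factor $\frac{x_iX}{\sinh(x_iX)}$ with $x_i=0$ equals $1$, so the specialized product is the finite product over $i\le k$. This yields $T_n(x_1,\dots,x_k)$ for all $n,k\ge1$.
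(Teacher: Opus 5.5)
There is no proof in the paper to compare against: the statement is quoted from the cited work and is not used. Your overall strategy (compare exponential generating functions, then specialize) is reasonable, but the first step has a sign error that sends the reduction to a false identity. \Cref{def:A} gives
\[ \sum_{n\ge0}T_n\frac{t^n}{n!}=\prod_i\frac{e^{x_it}-1}{x_it}=e^{\sigma_1t/2}\prod_i\frac{\sinh(x_it/2)}{x_it/2}=\exp\Big(\frac{\sigma_1t}{2}+\sum_{j\ge1}\frac{B_{2j}}{2j\,(2j)!}\,\sigma_{2j}\,t^{2j}\Big). \]
Your $\prod_i x_it/(1-e^{-x_it})$ carries the opposite sign on every $\sigma_{2j}$. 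It is the generating function of the Sylvester-wave polynomials $f_n$, i.e.\ of $T_n$ with $\sigma_{2j}\mapsto-\sigma_{2j}$ (compare \Cref{conj:fel1}). Your sanity check is miscomputed: the $t^2/2!$ coefficient of your product is $(3\sigma_1^2-\sigma_2)/12$, not $(3\sigma_1^2+\sigma_2)/12$.

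Consequently the identity you reduce to, $\Psi(\Omega(X))=\prod_i x_iX/\sinh(x_iX)$, is false, and the statement itself refutes it. The cases $n=1,2$ read $\sigma_1/2=\sigma_1\Psi(U_0)/2$ and $(3\sigma_1^2+\sigma_2)/12=(3\sigma_1^2\Psi(U_0)+\Psi(U_2))/12$, which force $\Psi(U_2)=+\sigma_2$. Since $\Omega(X)=1+U_2X^2/6+\cdots$, your target would instead require $\Psi(U_2)=-\sigma_2$. Your formula for $\sum_n\wt Y_nX^n/n!$ is fine: it follows from the definition with $\Psi$ applied coefficientwise, and no multiplicativity is used. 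So the correct target is $\Psi(\Omega(X))=\prod_i\sinh(x_iX)/(x_iX)$.

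That identity is the entire content of the theorem, and you leave it as an expectation. It can be pinned down. Write each factor of the product formula as $(1-q^je^{2X})(1-q^je^{-2X})/(1-q^j)^2$ and expand the logarithms; this gives
\[ \log\Omega(X)=\sum_{k\ge1}\frac{2^{2k}B_{2k}}{2k\,(2k)!}\,E_{2k}(q)\,X^{2k}. \]
So the corrected target says exactly that $\Psi$ acts multiplicatively with $E_{2k}\mapsto\sigma_{2k}$. You would have to check this against the actual definition (1.9) in the cited work.

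Your plan to reduce coefficients to polynomials in $E_2,E_4,E_6$ and then apply $\Psi$ as a ring homomorphism of $q$-series cannot work. If $\Psi$ were such a homomorphism, it would commute with $\log$, and the display above would force $\Psi(E_{2k})=\sigma_{2k}$ for every $k$. But $E_8=E_4^2$ as $q$-series, while $\sigma_8\ne\sigma_4^2$. So any multiplicativity you use must hold on formal monomials $E_\lambda=\prod_iE_{2\lambda_i}$ in independent symbols $E_{2k}$, not on $q$-series. Concretely, suppose (1.9) sends $E_\lambda$ to $\prod_i\sigma_{2\lambda_i}$. Then you must verify that the particular expansion of $U_{2k}$ defining $\Psi(U_{2k})$ equals $(2k+1)!$ times the $X^{2k}$-coefficient of $\exp\big(\sum_{j\ge1}\frac{2^{2j}B_{2j}}{2j\,(2j)!}E_{2j}X^{2j}\big)$, with the $E_{2j}$ treated as free variables.
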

In particular, this interpretation was used to prove
both \Cref{conj:fel1} and \Cref{conj:fel2}.

\subsection{Main result}
Given the renewed interest and results in $T_n$, here we close out the original birthplace of the $T_n$
and resolve the remaining \Cref{conj:main} from \cite{ref:fel}.
\begin{theorem}
  \label{thm:main}
  \Cref{conj:main} is true.
  For every $p \ge 0$, we have
  \[ K_p(S) = \sum_{r=0}^p \binom pr T_{p-r}(\sigma) G_r(S)
    + \frac{2^{p+1}}{p+1} T_{p+1}(\delta). \]
\end{theorem}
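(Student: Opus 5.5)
The plan is to turn the whole statement into one identity of exponential generating functions in an auxiliary variable $t$, and then compare coefficients of $t^p$.

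\textbf{Step 1: the syzygy side.} Substitute $z=e^{-t}$ into the Hilbert numerator. By \eqref{eq:syzygy},
\[ Q_S(e^{-t}) = \sum_{r\ge0} (-1)^r \CC_r(S)\,\frac{t^r}{r!}. \]
By \Cref{thm:Kp} the coefficients of $t^r$ vanish for $1\le r\le m-2$, and the coefficient of $t^{m-1}$ is known. \Cref{thm:Cmp_Kp} rewrites the higher coefficients, giving
\[ Q_S(e^{-t}) = 1 - \pi_m t^m \sum_{p\ge0} (-1)^p K_p(S)\frac{t^p}{p!} - \pi_m\, t^{m-1}. \]
The sign conventions here must be checked carefully against the definitions. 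This reduces the theorem to computing $\sum_p (-1)^pK_p t^p/p!$ from $Q_S(e^{-t})$.

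\textbf{Step 2: the gap side.} Use $H_S(z)=\frac1{1-z}-\sum_{g\in\Delta}z^g$ to write
\[ Q_S(z)=\prod_i(1-z^{d_i})\left(\tfrac1{1-z}-\textstyle\sum_{g\in\Delta} z^g\right). \]
At $z=e^{-t}$ we have $\sum_g e^{-gt}=\sum_r (-1)^r G_r t^r/r!$. Factor
\[ \prod_i(1-e^{-d_it}) = \pi_m t^m \prod_i \frac{1-e^{-d_it}}{d_it}. \]
The key input is an exponential generating function for the $T_n$, obtained from \Cref{def:A}/\Cref{def:B}. I expect it to take the form
\[ \sum_n T_n(\sigma)\frac{(-t)^n}{n!} = \prod_i \frac{d_i t}{1-e^{-d_it}}\,e^{-?}, \]
up to normalization, i.e.\ $\exp\bigl(\sum_k c_k\sigma_k t^k\bigr)$ with Bernoulli-type $c_k$. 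This matches $T_1=\sigma_1/2$, $T_2=(3\sigma_1^2+\sigma_2)/12$, since $\log\frac{x}{1-e^{-x}}=\frac x2+\frac{x^2}{24}-\cdots$.

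With that generating function in hand, the gap term gives exactly $\sum_r\binom pr T_{p-r}(\sigma)G_r$ after coefficient extraction. The binomial convolution comes from multiplying two EGFs, once $\pi_m t^m\prod_i(\cdot)$ has been divided out.

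\textbf{Step 3: the remaining term.} Handle the leftover term
\[ \frac{\prod_i(1-e^{-d_it})}{1-e^{-t}} \]
together with the corrections $1-\pi_mt^{m-1}$. Dividing by $\pi_m t^m\prod_i\frac{1-e^{-d_it}}{d_it}$ leaves roughly
\[ \frac{1}{1-e^{-t}}\prod_i\frac{d_it}{1-e^{-d_it}}\cdot(\ldots), \]
which is the EGF with an extra "$\sigma$ shifted by $-1$" factor. Halving the variable $t\mapsto 2t$ turns $\sigma_k-1$ into $2^k\delta_k$, which explains the appearance of $\delta_k=(\sigma_k-1)/2^k$. The antiderivative-like factor $\frac{2^{p+1}}{p+1}T_{p+1}(\delta)$ arises because $\frac{1}{1-e^{-t}}=\frac1t\cdot\frac{t}{1-e^{-t}}$ contributes one extra factor of $t$: the coefficient of $t^{p}$ on one side matches the coefficient of $t^{p+1}/(p+1)!$ on the other, producing the shift $p\mapsto p+1$ and the factor $1/(p+1)$. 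The terms $1$ and $\pi_mt^{m-1}$ should cancel the singular (negative and zeroth power) parts of this Laurent expansion.

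\textbf{Main obstacle.} The hard step is Step 3 together with the universal identity it needs. This means establishing, directly from the definition of $T_n$, that their EGF is multiplicative in the "generators", i.e.\ of exponential-of-power-sums form. It also requires the rescaling identity relating $T_n(\delta)$ to the EGF with $\sigma_k$ replaced by $\sigma_k-1$.

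I would first prove a lemma giving $\sum_n T_n(x)\,t^n/n!=\exp\bigl(\sum_k a_k x_k t^k\bigr)$ with explicit $a_k$. I would do this by checking that the defining recursion of \Cref{def:A}/\Cref{def:B} is equivalent to the logarithmic derivative of this exponential. Homogeneity of $T_n$ (weight $n$ with $x_k$ of weight $k$) then gives the $2^k$ rescaling immediately. After that, everything is bookkeeping of coefficients, with careful attention to signs $(-1)^p$ and to the low-order terms from \Cref{thm:Kp}.
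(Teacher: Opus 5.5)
Your overall route is the paper's (the paper substitutes $z=e^{t}$ rather than $e^{-t}$, which is immaterial). However, the identification everything hinges on is wrong.

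\textbf{The generating function of the $T_n$.} There is nothing to derive here. \Cref{def:A} \emph{defines} the $T_n$ by $\sum_n T_n(\sigma)t^n/n!=A(t)=\prod_i\frac{e^{d_it}-1}{d_it}$. Hence $\sum_n T_n(\sigma)(-t)^n/n!=A(-t)=\prod_i\frac{1-e^{-d_it}}{d_it}$, which is exactly the factor you split off in $P_S(e^{-t})=\pi_m t^m A(-t)$, not its reciprocal. Your candidate $\prod_i\frac{d_it}{1-e^{-d_it}}=1/A(-t)$ has logarithm $\frac{\sigma_1t}{2}-\frac{\sigma_2t^2}{24}+\cdots$. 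The expansion $\frac x2+\frac{x^2}{24}-\cdots$ you quote belongs to $\log\frac{e^x-1}{x}$. Your candidate would therefore give $T_1=-\sigma_1/2$ and $T_2=(3\sigma_1^2-\sigma_2)/12$. It is really the generating function of $T_n(\sigma_1,-\sigma_2,-\sigma_3,\dots)$, i.e.\ of the Sylvester-wave coefficients $f_n$ of \Cref{conj:fel1}.

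With this candidate the plan fails concretely. Dividing by all of $\pi_m t^m\prod_i(\cdot)$, as in your Step 3, turns the gap term into just $-\Phi_S(e^{-t})$. The only product of two series left is then your $K$-series times $1/A(-t)$, so the convolution you would read off involves the $K_r$, not the $G_r$. Also, multiplying your ``$T$-series'' by $\frac{t}{1-e^{-t}}$ \emph{adds} a generator $d=1$, shifting $\sigma_k\mapsto\sigma_k+1$ rather than $\sigma_k-1$.

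The right move is to divide only by $\pi_m t^m$. The gap term becomes $-A(-t)\Phi_S(e^{-t})$, whose $t^p$-coefficient is $-\frac{(-1)^p}{p!}\sum_r\binom pr T_{p-r}(\sigma)G_r$. The other term becomes $\frac1t\cdot\frac{t}{1-e^{-t}}A(-t)=B(-t)/t$. Here $\frac{t}{1-e^{-t}}$ cancels the factor a generator $d=1$ contributes to $A(-t)$. That removal is the shift $\sigma_k\mapsto\sigma_k-1$, and homogeneity turns it into $\delta_k$; this is exactly how \Cref{def:B} is justified. Your proposed lemma via a ``defining recursion'' is therefore unnecessary. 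There is no recursion, and taking logarithms of $A(t)$ gives the exponential-of-power-sums form at once.

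\textbf{Step 1 is also wrong in its low-order terms.} By \eqref{eq:Cr}, $\sum_r(-1)^r\CC_r t^r/r!$ equals $1-Q_S(e^{-t})$, not $Q_S(e^{-t})$. Inserting \Cref{thm:Kp} and \eqref{eq:Kp} gives $Q_S(e^{-t})=\pi_m t^{m-1}-\pi_m t^m\sum_p(-1)^pK_p t^p/p!$. This has no constant term, as it must since $Q_S(1)=0$ for $m\ge2$, and its $t^{m-1}$ term is $+\pi_m t^{m-1}$. Your formula has a spurious constant $1$ and the wrong sign on $\pi_m t^{m-1}$, though its $t^{m+p}$-coefficients happen to come out right.

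Consequently your claim that the terms $1$ and $\pi_m t^{m-1}$ cancel the singular parts is false. After dividing by $\pi_m t^m$, your $1$ leaves a term $1/(\pi_m t^m)$ that nothing cancels. What actually happens is that $\pi_m t^{m-1}$ alone cancels the $1/t$ in $B(-t)/t$, since $B(0)=1$. This leaves $\sum_p(-1)^pK_pt^p/p!=A(-t)\Phi_S(e^{-t})-\frac{B(-t)-1}{t}$. Comparing $t^p$-coefficients and using $[t^{p+1}]B=\frac{2^{p+1}}{(p+1)!}T_{p+1}(\delta)$ yields the theorem.
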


\begin{remark}
This work is a case study and test case for AxiomProver, an AI tool currently under development,
aimed at end-to-end automated theorem proving in mainstream mathematics.
Starting from a natural-language formulation of Fel’s conjecture,
which includes a self-contained collection of definitions,
AxiomProver generated a Lean/Mathlib statement and a fully verified proof.
Using that formal development as a reference point,
we prepared the exposition in the main text for a mathematical audience, aiming to supply context,
motivation, and a streamlined derivation that can be read independently of the Lean code.
\end{remark}

The rest of the paper is organized as follows.
In \Cref{sec:background}, we provide further background
and define the polynomials $T_n$.
Several examples are explicated in \Cref{sec:examples},
but those are for reference only and may be skipped.
The proof of Theorem~\ref{thm:main} is given in \Cref{sec:proof}.
The Appendix documents the formalization,
clarifies the experimental conditions under which the system was evaluated,
and provides links to the relevant files for interested readers
(Mathematicians not interested in automated theorem proving
can thus safely ignore the appendix.)

\subsection{Acknowledgments}
\label{sec:acknow}
The authors thank Tewodros Amdeberhan and Leonid Fel for discussions related to this paper.
We also thank the anonymous referees for helpful corrections.

\ifmanyauthors
\else
This paper describes a test case for AxiomProver,
an autonomous system that is currently under development.
The project engineering team is
Chris Cummins,
GSM,
Dejan Grubisic,
Leopold Haller,
Letong Hong (principal investigator),
Andranik Kurghinyan,
Kenny Lau,
Hugh Leather,
Aram Markosyan,
Manooshree Patel,
Gaurang Pendharkar,
Vedant Rathi,
Alex Schneidman,
Volker Seeker,
Shubho Sengupta (principal investigator),
Ishan Sinha,
Jimmy Xin,
and Jujian Zhang.
\fi

\section{Background}
Here we recall the relevant nuts and bolts that are required to prove Fel's Conjecture.

\label{sec:background}
\subsection{Syzygy degrees}
On the side of syzygy degrees, we content ourselves to note that $K_p(S)$
was given a complete definition in \Cref{sec:intro}.
Indeed, we have
\begin{itemize}
  \item The numerator $Q_S(z)$ of the Hilbert series $H_S(z)$ is given by \Cref{def:hilbert}.
  \item By combining \eqref{eq:syzygy} and \Cref{def:alternating},
    we may write $\CC_r$ directly in terms of the coefficients of $Q_S$:
    \begin{equation}
      \CC_r(S) = \sum_{n=0}^{\infty} n^r \cdot [z^n] \, (1-Q_S(z))
      \label{eq:Cr}
    \end{equation}
    This definition is somewhat ad hoc and perhaps not the ``morally correct'' one
    because it does not refer to partial Betti numbers $\beta_i$ at all;
    however, for the purposes of proving \Cref{thm:main} it is sufficient.
  \item Finally, \Cref{thm:Cmp_Kp} can be rearranged to say
    \begin{equation}
      K_p(S) = \frac{\CC_{m+p}(S)}{(-1)^m \pi_m \cdot \frac{(m+p)!}{p!}}.
      \label{eq:Kp}
    \end{equation}
\end{itemize}
The rest of the section is dedicated to describing $T_n(x_1, \dots, x_m)$
in terms of $x_i$, $\sigma_k$, and $\delta_k$.

\subsection{Universal symmetric polynomials, in terms of $x_i$}
It remains to give a full definition of the universal symmetric polynomials $T_n$.
Fel's original definition \cite[\S2]{ref:felpoly} begins with
\begin{align*}
  P_n(x_1, \dots, x_m)
  &\coloneq \sum_{1 \le i \le m} x_i^n - \sum_{1 \le i < j \le m} (x_i + x_j)^n
  + \sum_{1 \le i < j < k \le m} (x_i + x_j + x_k)^n \\
  &- \dots + (-1)^m (x_1 + \dots + x_m)^n
\end{align*}
for any integers $m \le n$, and then sets
\begin{equation}
  T_{n-m}(x_1, \dots, x_m)
  \coloneq \frac{P_n(x_1, \dots, x_m)}{x_1 \dots x_m \cdot \frac{(-1)^{m+1} n!}{(n-m)!}}.
  \label{eq:felT}
\end{equation}
We prefer to rewrite \eqref{eq:felT} more compactly using an exponential generating function.
\begin{definition}
  \label{def:A}
  Define the generating function
  \[ A(t) \coloneq \prod_{i=1}^m \frac{e^{x_i t} - 1}{x_i t} \in \QQ[[t]]. \]
  Then we have
  \[ T_n(x_1, \dots, x_m) \coloneq n! \cdot [t^n] \, A(t). \]
\end{definition}
To see that this definition matches \eqref{eq:felT}, it suffices to note that
\begin{align*}
  (-1)^m \prod_{i=1}^m (e^{x_i t} - 1)
  = \prod_{i=1}^m (1 - e^{x_i t})
  &= 1 - \sum_{1 \le i \le m} e^{x_i t} + \sum_{1 \le i < j \le m} e^{(x_i + x_j) t} \\
  &- \sum_{1 \le i < j < k \le m} e^{(x_i + x_j + x_k) t} + \dots \\
  &+ (-1)^m e^{(x_1 + \dots + x_m)t}.
\end{align*}
Hence the coefficient of $t^n$ in the numerator of $A(t)$ in \Cref{def:A}
equals
\[ (-1)^{m+1} n! \cdot P_n(x_1, \dots, x_m) \]
for $n \ge m$.
Adjusting by the denominator of $x_1 \dots x_m \cdot t^m$ gives \eqref{eq:felT}.

\subsection{Universal symmetric polynomials, in terms of $\sigma_k$}
It is clear that $T_n$ is symmetric in the $x_i$'s and of degree $n$.
Hence we will rewrite it in the power sum basis $\sigma_k \coloneq \sum_{i=1}^m x_i^k$.
For example, rather than writing
\begin{align*}
  T_1(x_1, \dots, x_m) &= \frac{x_1 + \dots + x_m}{2} \\
  T_2(x_1, \dots, x_m) &= \frac{3(x_1 + \dots + x_m)^2 + (x_1^2 + \dots + x_m^2)}{12} \\
  &\vdotswithin=
\end{align*}
we can write the more compact
\begin{align*}
  T_1(\sigma) &= \frac{\sigma_1}{2} \\
  T_2(\sigma) &= \frac{3\sigma_1^2 + \sigma_2}{12}
\end{align*}
viewing $T_i$ as a polynomial in $\sigma = (\sigma_1, \sigma_2, \dots, \sigma_i)$.
This has the advantage that it suppresses the dependence on $m$ from the definition.
Further examples for $n \le 7$ are recorded in \Cref{sec:example_Tn}.

\subsection{Universal symmetric polynomials, in terms of $\delta_k$}
Fel's conjecture involves replacing $T_n(\sigma)$ with $T_n(\delta)$,
by replacing each $\sigma_k$ with $\delta_k \coloneq \frac{\sigma_k-1}{2^k}$.
We'll also write an exponential generating function for $T_n(\delta)$:
\begin{definition}
  \label{def:B}
  Retaining $A(t)$ from \Cref{def:A}, define
  \[ B(t) \coloneq \frac{t}{e^t-1} A(t) \in \QQ[[t]]. \]
  Then we have
  \[ T_n(\delta) \coloneq \frac{n!}{2^n} \cdot [t^n] \, B(t). \]
\end{definition}
To see why $B(t)$ works,
introduce the symbols $y_1$, \dots, $y_m$ satisfying
\begin{equation}
  x_1^k + x_2^k + \dots + x_m^k = y_1^k + \dots + y_m^k + 1 \qquad k \ge 1.
  \label{eq:rig}
\end{equation}
(Strictly speaking, we should really use infinitely many variables,
so that $\sigma_1$, $\sigma_2$, \dots\ are all independent.
Alternatively one may fix $n$, choose $m > n$, and work modulo $t^{m+1}$.)
Then applying \Cref{def:A} and \eqref{eq:rig} means
\begin{align*}
  \prod_{i=1}^m \frac{e^{y_i t} - 1}{y_i t}
  &= \sum_{n=0}^{\infty} \frac{T_n(y_1, \dots, y_m)}{n!} t^n
  = \sum_{n=0}^{\infty} \frac{T_n(\sigma-1)}{n!} t^n \\
  \frac{e^t-1}{t} \cdot \prod_{i=1}^m \frac{e^{y_i t} - 1}{y_i t}
  &= \sum_{n=0}^\infty \frac{T_n(y_1, \dots, y_m, 1)}{n!} t^n
  = \sum \frac{T_n(\sigma)}{n!} t^n = A(t).
\end{align*}
Here $\sigma-1$ stands for $(\sigma_1-1, \sigma_2-1, \dots)$.
Hence, we conclude that
\[ \sum_{n \ge 0} \frac{T_n(\sigma-1)}{n!} t^n = \frac{t}{e^t-1} A(t) = B(t). \]
Finally, it is clear from homogeneity that
$T_n(\delta) = \frac{1}{2^n} T_n(\sigma-1)$, as needed.

\section{Worked examples}
\label{sec:examples}
For concreteness, we give three examples of the conjecture.
The first subsection shows, for reference,
the formula for $T_n(\sigma)$ for all $0 \le n \le 7$, copied from \cite{ref:fel}.
Then we work out by hand the values of $\Delta$, $G_r$, $\sigma_k$, $\delta_k$,
$\CC_n$, and $K_p$.
These explicit examples are only for reference,
and can be skipped with no loss of continuity.
Furthermore, we offer suitable Sage code in the GitHub link in the
Appendix that was used to generate these examples.

\subsection{The universal symmetric polynomials $T_n$.}
\label{sec:example_Tn}
For concreteness, the first several $T_n$ in terms of $\sigma_k$
(taken from \cite[p.~177]{ref:fel} or \cite[Appendix 3]{ref:fel}) are:
\begin{align*}
  T_0(\sigma) &= 1 \\
  T_1(\sigma) &= \frac{\sigma_1}{2} \\
  T_2(\sigma) &= \frac{3\sigma_1^2+\sigma_2}{12} \\
  T_3(\sigma) &= \sigma_1 \cdot \frac{\sigma_1^2 + \sigma_2}{8} \\
  T_4(\sigma) &= \frac{15\sigma_1^4 + 30\sigma_1^2\sigma_2 + 5\sigma_2^2 - 2\sigma_4}{240} \\
  T_5(\sigma) &= \sigma_1 \cdot \frac{3\sigma_1^4 + 10\sigma_1^2\sigma_2 + 5\sigma_2^2
    - 2\sigma_4}{96} \\
  T_6(\sigma) &= \frac{63\sigma_1^6 + 315\sigma_1^4\sigma_2 + 315\sigma_1^2\sigma_2^2 - 126\sigma_1^2\sigma_4
    + 35\sigma_2^3 - 42\sigma_2\sigma_4 + 16\sigma_6}{4032} \\
  T_7(\sigma) &= \sigma_1 \cdot \frac{9\sigma_1^6 + 63\sigma_1^4\sigma_2 + 105\sigma_1^2\sigma_2^2 - 42\sigma_1^2\sigma_4 + 35\sigma_2^3
    - 42\sigma_2\sigma_4 + 16\sigma_6}{1152}.
\end{align*}

\subsection{Worked example $S = \left\langle 3,5 \right\rangle$}
The semigroup $S = \left\langle d_1, d_2 \right\rangle = \left\langle 3,5 \right\rangle$
has gap set
\[ \Delta = \{1,2,4,7\}. \]
Hence, we have
\begin{align*}
  G_r &= 1 + 2^r + 4^r + 7^r \\
  \sigma_k &= 3^k + 5^k \\
  \delta_k &= \frac{3^k + 5^k - 1}{2^k}.
\end{align*}
We can compute the Hilbert series as
\[ H_S(z) = \frac{1}{1-z} - (z+z^2+z^4+z^7) = \frac{1-z^{15}}{(1-z^3)(1-z^5)}. \]
This means the numerator of the Hilbert series is $Q_S(z) = 1-z^{15}$ and we obtain
\[ \CC_n(S) = 15^n. \]
Thus, we have
\begin{align*}
  K_p(S) &= \frac{\CC_{2+p}(S)}{(-1)^2 \cdot 3 \cdot 5 \cdot (p+1)(p+2)} \\
  &= \frac{15^{p+2}}{15(p+1)(p+2)} = \frac{15^{p+1}}{(p+1)(p+2)}.
\end{align*}

\begin{remark}
  In general when $m = 2$, the ``obvious'' patterns above hold:
  given $\gcd(d_1, d_2) = 1$
  we generally have $Q_S(z) = 1 - z^{d_1 d_2}$
  and \[ K_p(S) = \frac{(d_1d_2)^{p+1}}{(p+1)(p+2)}. \]
  We note this also follows from \cite[equation (63)]{ref:fel}.
\end{remark}

\subsection{Worked example $S = \left\langle 4,5,6 \right\rangle$}
The semigroup $S = \left\langle d_1, d_2, d_3 \right\rangle = \left\langle 4,5,6 \right\rangle$
has gap set
\[ \Delta = \{  1, 2, 3, 7 \}. \]
The corresponding $G_r$, $\sigma_k$, $\delta_k$ are
\begin{align*}
  G_r &= 1 + 2^r + 3^r + 7^r \\
  \sigma_k &= 4^k + 5^k + 6^k \\
  \delta_k &= \frac{4^k + 5^k + 6^k - 1}{2^k}.
\end{align*}
We can compute the Hilbert series as
\[ H_S(z) = \frac{1}{1-z} - (z+z^2+z^3+z^7)
  = \frac{1 - z^{10} - z^{12} + z^{22}}{(1-z^4)(1-z^5)(1-z^6)}. \]
This means the numerator of the Hilbert series is $Q_S(z) = 1-z^{10}-z^{12}+z^{22}$ and we obtain
\[ \CC_n(S) = 10^n + 12^n - 22^n. \]
Thus, we have
\begin{align*}
  K_p(S) &= \frac{\CC_{3+p}(S)}{(-1)^3 \cdot 4 \cdot 5 \cdot 6 \cdot (p+1)(p+2)(p+3)} \\
  &= \frac{22^{p+3} - 10^{p+3} - 12^{p+3}}{120(p+1)(p+2)(p+3)}.
\end{align*}
We note this result also follows from \cite[equation (63)]{ref:fel},
since $S$ is a symmetric complete intersection $3$-generated semigroup,
with $Q_S(z) = (1-z^{10})(1-z^{12})$.

\subsection{Worked example $S = \left\langle 5,6,8,9 \right\rangle$}
The semigroup $S = \left\langle d_1, d_2, d_3, d_4 \right\rangle = \left\langle 5,6,8,9 \right\rangle$
has a gap set
\[ \Delta = \left\{ 1,2,3,4,7 \right\}. \]
The corresponding $G_r$, $\sigma_k$, $\delta_k$ are
\begin{align*}
  G_r &= 1 + 2^r + 3^r + 4^r + 7^r \\
  \sigma_k &= 5^k + 6^k + 8^k + 9^k \\
  \delta_k &= \frac{5^k + 6^k + 8^k + 9^k - 1}{2^k}.
\end{align*}
Computing the Hilbert series is more involved:
\[
  H_S(z) = \frac{1}{1-z} - (z+z^2+z^3+z^4+z^7)
  = \frac{Q_S(z)}{(1-z^5)(1-z^6)(1-z^8)(1-z^9)}
\]
where
\begin{align*}
  Q_S(z) &= 1 - z^{14} - z^{15} - z^{16} - z^{17} - 2 z^{18} + z^{22} + 2 z^{23} \\
    &\qquad+ z^{24} + z^{25} + 2 z^{26} + z^{27} - z^{31} - z^{32} - z^{35}.
\end{align*}
We obtain
\begin{align*}
  \CC_n(S) &= 14^n + 15^n + 16^n + 17^n + 2 \cdot 18^n \\
  &\qquad-22^n - 2 \cdot 23^n - 24^n - 25^n - 2 \cdot 26^n - 27^n \\
  &\qquad+31^n + 32^n + 35^n \\
  K_p(S) &= \frac{\CC_{p+4}(S)}{2160(p+1)(p+2)(p+3)(p+4)}.
\end{align*}

\section{Proof of \Cref{thm:main}}
\label{sec:proof}

Fix the semigroup $S = \left\langle d_1, \dots, d_m \right\rangle$ above.
The main idea of the proof is to convert all the data into exponential generating functions.
Then purely algebra manipulation will be enough to imply the desired result.
This has already been done for $A(t)$ and $B(t)$,
but the $G_r(S)$ and $\CC_r(S)$ have not been translated yet.
In general we will use the letter $t$ for the variable
in an exponential generating function and $z$ for an ordinary generating function,
so we can summarize the strategy as replacing $z$ with $e^t$ whenever possible.

For example, here is how to work with $\CC_n(S)$ with an exponential generating function.
\begin{lemma}
  \label{lem:C_n_eq_factorial_times_coeff}
  The exponential generating function for $\CC_n(S)$ is $1-Q_S$. That is,
  \[ \CC_n(S) = n! \cdot [t^n] (1 - Q_S(e^t)). \]
\end{lemma}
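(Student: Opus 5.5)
The plan is to reduce the claim to a coefficient identity for a finite sum of exponentials, using only \eqref{eq:Cr} and the fact that $Q_S$ is a polynomial. First we verify that $Q_S(z)\in\ZZ[z]$ is a genuine polynomial. This is part of \Cref{def:hilbert}, and it can also be seen directly: $H_S(z)=\frac{1}{1-z}-\sum_{g\in\Delta}z^g$, and since $\Delta$ is finite, $H_S(z)\prod_i(1-z^{d_i})$ is a polynomial because $(1-z)\mid(1-z^{d_1})$. Consequently $1-Q_S(z)=\sum_{n=0}^{N} c_n z^n$ for a finite list of integers $c_n=[z^n](1-Q_S(z))$. Substituting $z=e^t$ therefore makes sense in $\QQ[[t]]$, and gives the finite sum $1-Q_S(e^t)=\sum_{n=0}^N c_n e^{nt}$.

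Next we expand each exponential as $e^{nt}=\sum_{r\ge 0} n^r t^r/r!$. By linearity of coefficient extraction over the finite sum,
\[ r!\,[t^r]\bigl(1-Q_S(e^t)\bigr)=\sum_{n=0}^N c_n n^r . \]
By \eqref{eq:Cr} the right-hand side is exactly $\CC_r(S)$. This holds because the sum in \eqref{eq:Cr} only runs over the finitely many $n$ with $c_n\neq 0$, so it agrees with the sum up to $N$. Renaming $r$ to $n$ gives the lemma.

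The only point needing care is the convention at $n=0$, $r=0$, where we must use $0^0=1$ consistently. The constant term of $1-Q_S(e^t)$ is $\sum_n c_n=1-Q_S(1)$, and with $0^0=1$ the $r=0$ case of \eqref{eq:Cr} gives $\sum_n c_n$ as well, so the two sides agree. Also $c_0=0$ since $Q_S(0)=1$, so the term $n=0$ contributes nothing for any $r$. I expect no real obstacle here beyond justifying that the composition $Q_S(e^t)$ is well defined, which is immediate because $Q_S$ is a polynomial.
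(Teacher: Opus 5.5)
Your proposal is correct and matches the paper's proof: write $1-Q_S(z)=\sum_k c_k z^k$, substitute $z=e^t$, expand each $e^{kt}$, swap the finite sum with the series, and identify $\sum_k c_k k^n$ with $\CC_n(S)$ via \eqref{eq:Cr}. Your additional checks (that $Q_S$ is a genuine polynomial, so $Q_S(e^t)$ makes sense, and the $0^0=1$ convention at $n=0$) are harmless refinements that the paper leaves implicit.
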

\begin{proof}
  By comparison with \eqref{eq:Cr}.
  Suppose we write $1 - Q_S(z) = \sum_{k \ge 0} c_k z^k$, so that
  \[ \CC_r(S) = \sum_{k = 0}^\infty k^r \cdot c_k. \]
  In that case, we have
  \begin{align*}
    1 - Q_S(e^t) &= \sum_{k \ge 0} c_k e^{kt} \\
    &= \sum_{k \ge 0} c_k \cdot \sum_{n \ge 0} k^n \cdot \frac{t^n}{n!} \\
    &= \sum_{n \ge 0} \left( \sum_{k \ge 0} c_k \cdot k^n \right) \cdot \frac{t^n}{n!} \\
    &= \sum_{n \ge 0} \CC_r(S) \cdot \frac{t^n}{n!}. \qedhere
  \end{align*}
\end{proof}

Similarly, we can convert $G_r(S)$ into an exponential generating function as follows.
\begin{definition}
  The \emph{gap polynomial} $\Phi_S(z)$ is defined as
  \[ \Phi_S(z) = \frac{1}{1-z} - H_S(z) = \sum_{g \in \Delta} z^g. \]
\end{definition}
\begin{lemma}
  \label{lem:Phi_at_exp_eq_gap_egf}
  The exponential generating function for $G_n(S)$ is given by $\Phi_S(e^t)$, that is
  \[ \Phi_S(e^t) = \sum_{n=0}^{\infty} G_n(S) \cdot \frac{t^n}{n!}. \]
\end{lemma}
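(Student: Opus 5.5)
The plan is to mirror the proof of \Cref{lem:C_n_eq_factorial_times_coeff}. The key point is that $\Phi_S(z)$ is a \emph{polynomial}, because the gap set $\Delta$ is finite. So substituting $z = e^t$ is a finite sum of exponentials and is unambiguous in $\QQ[[t]]$. This sidesteps the formal issue that $\frac{1}{1-z}$ and $H_S(z)$ separately have no meaning at $z=e^t$ (the constant term of $1-e^t$ vanishes). Only their difference, which is $\sum_{g\in\Delta} z^g$, is ever evaluated.

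First I will confirm the second equality in the definition of $\Phi_S$. Since $\frac{1}{1-z}=\sum_{n\ge 0} z^n$ and $H_S(z)=\sum_{s\in S} z^s$ with $S\subseteq \NN$, their difference is $\sum_{n\in \NN\setminus S} z^n=\sum_{g\in\Delta}z^g$. This is a polynomial because $\Delta$ is finite.

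Next I will substitute and expand each exponential:
\[ \Phi_S(e^t)=\sum_{g\in\Delta} e^{gt}=\sum_{g\in\Delta}\sum_{n\ge0} g^n\frac{t^n}{n!}. \]
The outer sum is finite, so I can interchange the two sums coefficientwise in $\QQ[[t]]$. This gives $\sum_{n\ge0}\bigl(\sum_{g\in\Delta}g^n\bigr)\frac{t^n}{n!}$, and the inner sum is exactly $G_n(S)$ by \eqref{eq:gap}.

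No step is a real obstacle. The only point needing care is the one already noted: $\Phi_S(e^t)$ must be read as the evaluation of the polynomial $\sum_{g\in\Delta} z^g$, not of the formal expression $\frac{1}{1-z}-H_S(z)$.
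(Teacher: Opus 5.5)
Your proposal is correct and is the same direct computation the paper uses: write $\Phi_S(e^t)=\sum_{g\in\Delta}e^{gt}$, expand each exponential, and swap the finite sum over $\Delta$ with the sum over $n$. Your extra remark is a sensible clarification that the paper leaves implicit: since $\Delta$ is finite, only the polynomial $\sum_{g\in\Delta}z^g$ is evaluated at $z=e^t$, never $\frac{1}{1-z}$ or $H_S(z)$ on their own.
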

\begin{proof}
  By direct computation, we have
  \begin{align*}
    \Phi_S(e^t) &= \sum_{g \in \Delta} (e^t)^g
    = \sum_{g \in \Delta} e^{tg} \\
    &= \sum_{g \in \Delta} \sum_{n \ge 0} \frac{(gt)^n}{n!}
    = \sum_{n \ge 0} \sum_{g \in \Delta} \frac{(gt)^n}{n!}
    = \sum_{n \ge 0} G_n(S) \cdot \frac{t^n}{n!} \qedhere.
  \end{align*}
\end{proof}

\begin{definition}
  The \emph{product polynomial} $P_S(z)$, defined by
  \[ P_S(z) \coloneq \prod_{i=1}^m (1-z^{d_i}) \]
  is the denominator of our definition of the Hilbert series $H_S(z)$.
  Hence, we have $H_S(z) = \frac{Q_S(z)}{P_S(z)}$.
\end{definition}

\begin{lemma}
  \label{lem:poly_at_exp_P_as_series}
  We have
  \[ P_S(e^t) = (-1)^m \pi_m \cdot t^m \cdot A(t). \]
\end{lemma}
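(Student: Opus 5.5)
The plan is to substitute $z = e^t$ into the definition of $P_S$ and factor each term separately, with $A(t)$ taken from \Cref{def:A} evaluated at $x_i = d_i$. The identity lives in $\QQ[[t]]$. The left side is a finite polynomial in the exponentials $e^{d_i t}$, so it is a well-defined power series. The right side is a product of power series, since each factor $\frac{e^{d_i t}-1}{d_i t}$ is a genuine power series with constant term $1$.

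First, for each $i$ I will write
\[ 1 - e^{d_i t} = -(e^{d_i t} - 1) = -\, d_i t \cdot \frac{e^{d_i t}-1}{d_i t}. \]
Here $\frac{e^{x t}-1}{x t}$ means the series $\sum_{n\ge 0} \frac{x^n t^n}{(n+1)!}$. The identity $x t \cdot \sum_{n \ge 0} \frac{x^n t^n}{(n+1)!} = e^{xt}-1$ is an immediate coefficient comparison. Because we never actually divide by $t$, there is no issue with inverting a non-unit in $\QQ[[t]]$.

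Next I take the product over $i = 1, \dots, m$. By commutativity of $\QQ[[t]]$ this gives
\[ P_S(e^t) = \prod_{i=1}^m \left( -\, d_i t \cdot \frac{e^{d_i t}-1}{d_i t} \right) = (-1)^m \left(\prod_{i=1}^m d_i\right) t^m \prod_{i=1}^m \frac{e^{d_i t}-1}{d_i t}. \]
The factor $\prod_i d_i$ is $\pi_m$, and the last product is $A(t)$ with $x_i = d_i$. This gives exactly $(-1)^m \pi_m \cdot t^m \cdot A(t)$.

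There is no real obstacle. The only point needing care is the meaning of $\frac{e^{x t}-1}{x t}$ as an element of $\QQ[[t]]$, defined via its series rather than by division. Once that is fixed, as above, the rest is bookkeeping of signs and the factors $d_i t$.
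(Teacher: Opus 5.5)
Your proposal is correct and follows the same route as the paper: substitute $z=e^t$ to get $\prod_{i=1}^m (1-e^{d_i t})$, then compare each factor with the corresponding factor of $A(t)$ from \Cref{def:A}. You spell out explicitly, as the paper does not, that $\frac{e^{xt}-1}{xt}$ is meant as the series $\sum_{n\ge 0} \frac{x^n t^n}{(n+1)!}$ so no division by $t$ occurs; this is a welcome addition of rigor, not a different argument.
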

\begin{proof}
  By definition
  \[ P_S(e^t) = \prod_{i=1}^m \left( 1 - e^{d_i t} \right) \]
  so this follows by comparison to \Cref{def:A}.
\end{proof}

\begin{lemma}
  \label{lem:P_div_at_exp_series}
  We have
  \[ \frac{P_S(e^t)}{1-e^t} = (-1)^{m+1} \pi_m \cdot t^{m-1} \cdot B(t). \]
\end{lemma}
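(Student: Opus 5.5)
This follows directly from \Cref{lem:poly_at_exp_P_as_series} together with \Cref{def:B}; the only work is to divide by $1-e^t$ as formal power series, since neither side is a polynomial in $t$.

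Start from \Cref{lem:poly_at_exp_P_as_series}, which gives $P_S(e^t) = (-1)^m \pi_m t^m A(t)$. Write $1 - e^t = -t \cdot \frac{e^t-1}{t}$. The series $\frac{e^t-1}{t} = 1 + t/2 + \cdots$ has constant term $1$, so it is invertible in $\QQ[[t]]$, with inverse $\frac{t}{e^t-1}$. Then
\[ \frac{P_S(e^t)}{1-e^t} = \frac{(-1)^m \pi_m t^m A(t)}{-t \cdot \frac{e^t-1}{t}} = (-1)^{m+1}\pi_m \, t^{m-1} \cdot \frac{t}{e^t-1} A(t), \]
and by \Cref{def:B} the last factor is exactly $B(t)$.

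The only point needing care is what the left-hand side means. As a polynomial, $P_S(z)/(1-z)$ is genuinely a polynomial in $z$, because $1-z$ divides $1-z^{d_1}$. Its evaluation at $z=e^t$ must therefore agree with the quotient of formal power series. Since $1-e^t$ is not a unit in $\QQ[[t]]$, I will justify the division by cancellation instead. Set $R(z) \coloneq P_S(z)/(1-z) \in \ZZ[z]$. Then $(1-e^t)R(e^t) = P_S(e^t)$. The claimed right-hand side $F(t)$ also satisfies $(1-e^t)F(t) = P_S(e^t)$, by the computation above read backwards. So $(1-e^t)(R(e^t)-F(t))=0$. Because $\QQ[[t]]$ is an integral domain and $1-e^t \neq 0$, we conclude $R(e^t)=F(t)$. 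This cancellation step is the only potential obstacle, and it is mild.
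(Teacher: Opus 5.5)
Your proposal is correct and follows the paper's route. The paper proves this lemma in one line: it compares with Lemma~\ref{lem:poly_at_exp_P_as_series} and with the definition $B(t)=\frac{t}{e^t-1}A(t)$, which is exactly your factorization $1-e^t=-t\cdot\frac{e^t-1}{t}$. Your cancellation argument in the integral domain $\QQ[[t]]$, reading $P_S(z)/(1-z)$ as a polynomial evaluated at $e^t$, makes explicit a point of interpretation that the paper leaves implicit.
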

\begin{proof}
  In the same way as \Cref{lem:poly_at_exp_P_as_series}
  this follows by comparison to \Cref{def:B}.
\end{proof}

\begin{lemma}
  \label{lem:one_minus_Q_at_exp_coeff}
  We have
  \[ 1 - Q_S(e^t) = 1 + (-1)^m \pi_m \cdot
    \left( t^{m-1} \cdot B(t) + t^m \cdot A(t) \cdot \Phi_S(e^t) \right). \]
\end{lemma}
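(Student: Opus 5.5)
We start from the polynomial identity relating the Hilbert numerator to the gap polynomial. Since $H_S(z) = \frac{1}{1-z} - \Phi_S(z)$ and $H_S(z) = Q_S(z)/P_S(z)$, we have, as polynomials in $\ZZ[z]$ (after clearing denominators),
\[ Q_S(z) = \frac{P_S(z)}{1-z} - P_S(z)\,\Phi_S(z). \]
Here $P_S(z)/(1-z)$ is a genuine polynomial because $1-z$ divides each factor $1-z^{d_i}$ (and $m \ge 1$). The identity holds first as rational functions, hence as polynomials. It therefore survives the substitution $z = e^t$, which is a ring homomorphism from $\ZZ[z]$ into $\QQ[[t]]$ (well defined on polynomials, since $e^t$ has constant term $1$). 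So
\[ 1 - Q_S(e^t) = 1 - \left.\frac{P_S(z)}{1-z}\right|_{z=e^t} + P_S(e^t)\,\Phi_S(e^t). \]

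Next we substitute the two earlier computations. By \Cref{lem:poly_at_exp_P_as_series}, $P_S(e^t)\Phi_S(e^t) = (-1)^m \pi_m\, t^m A(t)\,\Phi_S(e^t)$. By \Cref{lem:P_div_at_exp_series}, the middle term is $-(-1)^{m+1}\pi_m t^{m-1}B(t) = (-1)^m \pi_m t^{m-1} B(t)$. Adding these gives exactly the claimed formula
\[ 1 - Q_S(e^t) = 1 + (-1)^m\pi_m\left(t^{m-1}B(t) + t^m A(t)\Phi_S(e^t)\right). \]

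The only delicate point is interpreting $\frac{P_S(e^t)}{1-e^t}$ in \Cref{lem:P_div_at_exp_series}. Since $1-e^t$ has zero constant term, it is not invertible in $\QQ[[t]]$, so the quotient must be read as the evaluation at $z=e^t$ of the polynomial quotient $P_S(z)/(1-z)$. Equivalently, it is the unique series $F$ with $(1-e^t)F = P_S(e^t)$, which is unique because $\QQ[[t]]$ is an integral domain. I would state this explicitly, so that the substitution of the polynomial identity is justified. After that, the remaining steps are routine sign bookkeeping.
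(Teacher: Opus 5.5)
Your proposal is correct and is essentially the paper's proof. Like the paper, you rearrange $\Phi_S = \frac{1}{1-z} - Q_S/P_S$ into $1 - Q_S(z) = 1 - \frac{P_S(z)}{1-z} + \Phi_S(z)P_S(z)$, substitute $z=e^t$, and apply \Cref{lem:poly_at_exp_P_as_series} and \Cref{lem:P_div_at_exp_series} with the sign flip $-(-1)^{m+1}=(-1)^m$. You also state explicitly that $\frac{P_S(e^t)}{1-e^t}$ means the evaluation of the polynomial quotient $P_S(z)/(1-z)$, because $1-e^t$ is not a unit in $\QQ[[t]]$; this is a useful clarification that the paper leaves implicit.
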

\begin{proof}
  We start from the identity
  \[ \Phi_S(z) = \frac{1}{1-z} - H_S(z) = \frac{1}{1-z} - \frac{Q_S(z)}{P_S(z)} \]
  so that
  \[ Q_S(z) = \frac{P_S(z)}{1-z} - \Phi_S(z) P_S(z)  \]
  and
  \[ 1 - Q_S(z) = 1 - \frac{P_S(z)}{1-z} + \Phi_S(z) P_S(z). \]
  Substituting $z = e^t$ in \Cref{lem:P_div_at_exp_series},
  and \Cref{lem:poly_at_exp_P_as_series} to rewrite $P_S(e^t)$ in terms of $A(t)$ and $B(t)$
  completes the proof.
\end{proof}

Now choose $n = m + p$, for any $p \ge 0$,
and proceed to extract the $t^{m+p}$ coefficient of \Cref{lem:one_minus_Q_at_exp_coeff}.
On the left-hand side, using \Cref{lem:C_n_eq_factorial_times_coeff} we get
\begin{equation}
  \label{eq:final_C}
  [t^{m+p}] (1 - Q_S(e^t)) = \frac{\CC_{m+p}(S)}{(m+p)!}.
\end{equation}
We compare this with the following two calculations.

\begin{lemma}
  \label{lem:B_term_contribution}
  We have
  \[ p! \cdot [t^{p+1}] B(t) = \frac{2^{p+1}}{p+1} T_{p+1}(\delta). \]
\end{lemma}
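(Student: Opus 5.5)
This is a direct unwinding of \Cref{def:B}. That definition says that for every $n \ge 0$,
\[ T_n(\delta) = \frac{n!}{2^n} \cdot [t^n]\, B(t), \qquad\text{equivalently}\qquad [t^n]\, B(t) = \frac{2^n}{n!}\, T_n(\delta). \]
We apply this with $n = p+1$, which is legitimate since $p \ge 0$ gives $n \ge 1$ and the definition covers every coefficient of $B(t)$. This gives
\[ [t^{p+1}]\, B(t) = \frac{2^{p+1}}{(p+1)!}\, T_{p+1}(\delta). \]
Multiplying both sides by $p!$ and using $p!/(p+1)! = 1/(p+1)$ yields
\[ p! \cdot [t^{p+1}]\, B(t) = \frac{2^{p+1}}{p+1}\, T_{p+1}(\delta), \]
which is the claim.

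There is no real obstacle here. The only point worth checking is that \Cref{def:B} really identifies $\frac{n!}{2^n}[t^n]B(t)$ with $T_n(\delta)$ as a polynomial identity in the $\delta_k$, with $\delta_k = (\sigma_k - 1)/2^k$ and $\sigma_k = \sum_i d_i^k$. This was justified after \Cref{def:B}. The argument there passes from $T_n(\sigma-1)$, via the auxiliary variables $y_i$ of \eqref{eq:rig}, to $T_n(\delta) = 2^{-n} T_n(\sigma - 1)$ by homogeneity: $T_n$ is weighted homogeneous of degree $n$ when $\sigma_k$ has weight $k$. We take that discussion as given, so the lemma reduces to the arithmetic above.

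The role of the lemma in the main argument is to handle the term $t^{m-1}B(t)$ in \Cref{lem:one_minus_Q_at_exp_coeff}. Its coefficient of $t^{m+p}$ is $[t^{p+1}]B(t)$. After multiplying by $(m+p)!$ and dividing by $(-1)^m \pi_m (m+p)!/p!$ as in \eqref{eq:Kp}, this coefficient appears exactly as $p! \cdot [t^{p+1}]B(t)$, which the lemma evaluates.
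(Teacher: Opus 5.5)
Your proof is correct and is exactly the paper's argument: read off $[t^{p+1}]B(t) = \frac{2^{p+1}}{(p+1)!}T_{p+1}(\delta)$ from \Cref{def:B} and multiply by $p!$. Your remarks on homogeneity and on where the lemma enters the main computation are accurate, but the lemma itself does not need them.
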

\begin{proof}
  From \Cref{def:B}, we have
  \[ [t^{p+1}] B(t) = \frac{2^{p+1}}{(p+1)!} T_{p+1}(\delta) \]
  which gives the desired conclusion by multiplying by $p!$.
\end{proof}

\begin{lemma}
  \label{lem:gap_term_contribution}
  We have
  \[ p! \cdot [t^p] A(t) \cdot \Phi_S(e^t)
    = \sum_{r=0}^p \binom pr T_{p-r}(\sigma) \cdot G_r(S). \]
\end{lemma}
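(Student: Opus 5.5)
This is a Cauchy product of two exponential generating functions, followed by a binomial convolution. The plan is to expand both factors as series in $t$ and read off the coefficient of $t^p$.

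First, write the two factors as power series. By \Cref{def:A}, evaluated at $x_i = d_i$ so that the power sums are $\sigma_k = \sum_i d_i^k$, we have
\[ A(t) = \sum_{j \ge 0} \frac{T_j(\sigma)}{j!} t^j. \]
By \Cref{lem:Phi_at_exp_eq_gap_egf},
\[ \Phi_S(e^t) = \sum_{r \ge 0} \frac{G_r(S)}{r!} t^r. \]
Both are honest elements of $\QQ[[t]]$. Since $\Phi_S$ is a polynomial (the gap set $\Delta$ is finite), $\Phi_S(e^t)$ is a finite sum of exponentials $e^{gt}$. So substituting $z = e^t$ and swapping the order of summation is legitimate, and that swap was already carried out in the lemma.

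Next, take the product. The coefficient of $t^p$ in $A(t)\Phi_S(e^t)$ is the Cauchy convolution
\[ [t^p]\, A(t)\Phi_S(e^t) = \sum_{r=0}^{p} \frac{T_{p-r}(\sigma)}{(p-r)!} \cdot \frac{G_r(S)}{r!}. \]
Multiplying by $p!$ and using $\frac{p!}{r!\,(p-r)!} = \binom pr$ gives exactly $\sum_{r=0}^p \binom pr T_{p-r}(\sigma) G_r(S)$, as claimed.

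The only point that needs any care is the identification of $n!\,[t^n]A(t)$ with $T_n(\sigma)$ as a polynomial in the power sums $\sigma_k$ at $x_i = d_i$. This is simply \Cref{def:A} read through the power-sum rewriting of the previous section, so I expect no real obstacle. The lemma is routine coefficient bookkeeping.
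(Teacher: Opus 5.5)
Your proposal is correct and is essentially the paper's own argument. Both proofs expand $A(t)$ and $\Phi_S(e^t)$ with coefficients $T_j(\sigma)/j!$ and $G_r(S)/r!$, take the Cauchy product to extract $[t^p]$, and then multiply by $p!$ using $p!/(r!\,(p-r)!) = \binom pr$.
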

\begin{proof}
  Compute
  \begin{align*}
    [t^p] A(t) \cdot \Phi_S(e^t)
    &= \sum_{r=0}^p [t^{p-r}] A(t) \cdot [t^r] \Phi_S(e^t) \\
    &= \sum_{r=0}^p \frac{T_{p-r}(\sigma)}{(p-r)!} \cdot \frac{G_r(S)}{r!} \\
    &= \frac{1}{p!} \sum_{r=0}^p \binom pr T_{p-r}(\sigma) G_r(S). \qedhere
  \end{align*}
\end{proof}
Hence, if we compare the $[t^{m+p}]$ coefficients
by combining \eqref{eq:final_C}
with the sum of \Cref{lem:B_term_contribution} and \Cref{lem:gap_term_contribution}
in \Cref{lem:one_minus_Q_at_exp_coeff},
we obtain that
\begin{equation}
  \label{eq:final}
  \frac{\CC_{m+p}(S)}{(m+p)!} = \frac{(-1)^m}{p!} \pi_m \cdot \left(
      \frac{2^{p+1}}{p+1} T_{p+1}(\delta)
      + \sum_{r=0}^p \binom pr T_{p-r}(\sigma) G_r(S) \right).
\end{equation}
Dividing out by $\frac{(-1)^m}{p!} \pi_m$ gives \Cref{conj:main}, as desired,
completing the proof of Theorem~\ref{thm:main}.
\appendix
\section*{Appendix}
\setcounter{section}{1}
\label{sec:appendix}
\smallskip

AxiomProver is an AI system for mathematical research via formal proof that is currently under development.
As an early test case in this effort, we present this case study,
treating \Cref{conj:main} as an end-to-end formalization target.
We chose it because we believed that its proof could be driven largely
without requiring a full formal development of numerical semigroups,
minimal resolutions, symmetric functions, and related machinery,
placing it within reach of today’s Mathlib.
We also chose the problem because it has gained renewed relevance given the
recent reemergence of the polynomials $T_n$.

This paper confirms that expectation: the formal statement and proof
are fully formalized in Lean/Mathlib (see \cite{Lean,Mathlib2020}) and were produced
by AxiomProver from a natural-language statement of \Cref{conj:main}.
The following section makes
precise what ``produced’’ means, and describes our process.

\subsection*{Process}
The formal proofs provided in this work were developed and verified using Lean \textbf{4.26.0}.
Compatibility with earlier or later versions is not guaranteed
due to the evolving nature of the Lean 4 compiler and its core libraries.
The relevant files are all posted in the following repository:
\begin{center}
  \url{https://github.com/AxiomMath/fel-polynomial}
\end{center}
The input files were
\begin{itemize}
  \item \texttt{Fel\_Conjecture.tex}, a natural-language statement of the problem;
  \item a \texttt{task.md} that contains the single line
  \begin{quote}
    \slshape
    1. State and prove Fel's conjecture in Lean.
  \end{quote}
  \item a configuration file \texttt{.environment} that contains the single line
  \begin{quote}
    \slshape
    lean-4.26.0
  \end{quote}
  which specifies to AxiomProver which version of Lean should be used.
\end{itemize}
Given these three input files,
AxiomProver autonomously provided the following output files:
\begin{itemize}
  \item \texttt{problem.lean}, a Lean 4.26.0 formalization of the problem statement; and
  \item \texttt{solution.lean}, a complete Lean 4.26.0 formalization of the proof.
\end{itemize}

The repository also contains an ancillary file \texttt{examples.sage} that
provides the examples in the earlier Section~\ref{sec:examples}.
This file was written by hand and is unrelated to the formalization process
(in particular, it was not provided to AxiomProver as part of the input).

After AxiomProver generated a solution, the human authors wrote this paper
(without the use of AI) for human readers.
Indeed, a research paper is a narrative designed to communicate ideas to humans,
whereas a Lean files are designed to satisfy a computer kernel.

\subsection*{Further commentary}
We offer further commentary and perspective for these results.

\smallskip
\noindent
(1) This work represents a case where an AI system (i.e.\ AxiomProver)
  is given a specified and self-contained natural language problem statement,
  which it in turn solves and verifies in Lean.
  We have already known for a while that AI can often
  solve and formalize competition problems from Putnam and IMO.
  This paper is an early example of the end-to-end formalization of research mathematics.

\smallskip
\noindent
(2) We specifically selected an open problem that we believed was
  within reach of the current Mathlib repository.
  We didn't want to worry about gaps in Mathlib preventing the formalization.

\printbibliography[title=References]

\end{document}